\newtheorem{theorem}{Theorem}[section]
\newtheorem{lemma}[theorem]{Lemma}
\newtheorem{proposition}[theorem]{Proposition}
\newtheorem{corollary}[theorem]{Corollary}
\newtheorem{remar}[theorem]{Remark}
\newenvironment{remark}{\begin{remar}\rm}{\gzaun\end{remar}}
\newcommand{\gzaun}{\unskip\nobreak\hfil\penalty50%
\hskip1em\hbox{}\nobreak\hfil%
$\#$\parfillskip=0pt\finalhyphendemerits=0}
\newcommand{\bfind}[1]{\index{#1}{\bf #1}}
\newcommand{\n}{\par\noindent}
\newcommand{\sn}{\par\smallskip\noindent}
\newcommand{\mn}{\par\medskip\noindent}
\newcommand{\bn}{\par\bigskip\noindent}
\newcommand{\pars}{\par\smallskip}
\newcommand{\parm}{\par\medskip}
\newcommand{\parb}{\par\bigskip}
\newcommand{\cal}{\mathcal}
\newcommand{\Aut}{\mbox{\rm Aut}\,}
\newcommand{\sep}{^{\rm sep}}
\newcommand{\chara}{\mbox{\rm char}\,}
\newcommand{\Gal}{\mbox{\rm Gal}\,}
\newcommand{\smin}{_{\rm min}}
\newcommand{\rme}{\mbox{\rm e}\,}
\newcommand{\rmf}{\mbox{\rm f}\,}
\newcommand{\rmid}{\mbox{\rm id}}
\newcommand{\trmid}{\mbox{\tiny\rm id}}
\newcommand{\smid}{\setminus\{\rmid\}}
\newcommand{\tr}{\mbox{\rm Tr}\,}
\newcommand{\cO}{\mathcal{O}}
\newcommand{\cM}{\mathcal{M}}
\newcommand{\cD}{\mathcal{D}}
\newcommand{\cE}{\mathcal{E}}
\newcommand{\cC}{\mathcal{C}}
\newcommand{\Z}{\mathbb Z}
\newcommand{\N}{\mathbb N}
\newcommand{\Q}{\mathbb Q}
\newcommand{\F}{\mathbb F}
\begin{document}

\title[Topics in higher ramification theory I]{Topics in higher ramification theory I:
ramification ideals}
\author{Franz-Viktor Kuhlmann}
\address{Institute of Mathematics, University of Szczecin,
ul.\ Wielkopolska 15, 70-451 Szczecin, Poland}
\email{fvk@usz.edu.pl}

\thanks{The author would like to thank Josnei Novacoski for helpful discussions and
for pointing out a mistake in the example given in Section~\ref{sect2ex}. He also
thanks Konstantinos Kartas for helping to develop the last example in
Section~\ref{sectIeqM}.}

\subjclass[2010]{12J10, 12J25}

\keywords{Higher ramification theory, ramification ideal, defect,
Artin-Schreier extension, Kummer extension}

\date{24.\ 6.\ 2026}

\begin{abstract}
We introduce and study the notion of ramification ideals in higher ramification theory.
After general results on their computation for finite extensions, we discuss their
connection with the possibly nontrivial defect of the extensions. We
compute them for Artin-Schreier extensions and Kummer extensions of prime degree
equal to the residue characteristic, which may or may not have nontrivial defect. We
present an example that shows that nontrivial defect in an extension of degree $p^2$,
$p$ a prime, may not imply the existence of a nonprincipal ramification ideal.
\end{abstract}

\maketitle

\begin{center}
{\rm Dedicated to Dr.\ Sudesh K.\ Khanduja\\
on the occasion of her 75th birthday}
\end{center}

\mn
%
%
\section{Introduction}
Higher ramification theory is the theory of valued field extensions $\cE=(L|K,v)$
where $(K,v)$ has positive residue characteristic $p$ and is its own \bfind{absolute
rami\-fication field} (see Section~\ref{sectrf}). The latter means that $(K,v)$ is
henselian, its \bfind{value group} $vK$ is divisible by all primes different from $p$,
and its \bfind{residue field} $Kv$ is separable-algebraically closed. The
\bfind{absolute Galois group} $\Gal K:=\Gal K\sep|K$, where $K\sep$ denotes the
separable-algebraic closure of $K$, is then a $p$-group. This implies that every finite
Galois extension of $K$ is a tower of Galois extensions of degree $p$. In \bfind{equal
characteristic}, i.e., if $\chara K=\chara Kv=p$, the latter are
\bfind{Artin-Schreier extensions}, and in \bfind{mixed characteristic}, i.e., if
$\chara K=0$ and $\chara Kv=p$, they are \bfind{Kummer extensions} because $K$
contains all $p$-th roots of unity (see Section~\ref{sectGaldefdegp}).


\pars
Our interest in higher ramification theory originated from the following well
known deep open valuation theoretical problems in positive characteristic:
\sn
1) local uniformization, the local form of resolution of singularities in arbitrary dimension,
\sn
2) decidability of the field $\F_q((t))$ of Laurent series over a finite field
$\F_q$, and of its perfect hull, where $q$ is a power of a prime $p$.
\sn
Both problems are connected with the structure theory of valued function fields of
positive characteristic $p$. The main obstruction here is the phenomenon of the
\bfind{defect}, which we define in Section~\ref{sectdef}. For background on the
defect and its impact on the above problems,
see \cite{Ku23,Ku26,Ku11,Ku29,Ku31,Ku39,Ku55,Ku45}.

Via ramification theory, the study of defect in extensions of arbitrary finite degree
can be reduced to the investigation of purely inseparable extensions and of Galois
extensions of degree $p=\chara Kv>0$ with nontrivial defect. This is explained e.g.\ in
\cite[Section~2.1]{pr1}. Defects of Galois extensions $\cE=(L|K,v)$ of prime degree
have been classified (``dependent'' vs.\ ``independent'' defect) first in \cite{Ku30}
for the equal characteristic case and then in \cite{KuRz} in general. Theorem~1.4 of
\cite{KuRz} presents various criteria for independent defect.

One of them uses the ramification ideal $I_\cE\,$, which we define in
Section~\ref{secthrg}.
Section~\ref{sectcompri} is then devoted to the computation of ramification ideals.
Starting with a first approach described by Ribenboim in \cite{R2} we develop more
elaborate computations. Of particular interest is the case of extensions that have
valuation bases; for this notion, see Section~\ref{sectvb}. Based on this, we treat
towers of two Galois extensions where the upper one has a valuation basis, which we
will need in Section~\ref{sect2ex}.

In Section~\ref{sectridef} we discuss the correlation between defect and the existence
of nonprincipal ramification ideals. While it is true that a finite Galois extension
without defect has only principal ramification ideals, the converse does not hold. We
give an example for this phenomenon in Section~\ref{sect2ex} for the equal
characteristic case.

In Section~\ref{sectGaldefdegp} we first compute the unique ramification ideals
$I_\cE$ for Galois extensions $\cE=(L|K,v)$ of degree $p=\chara Kv$ without defect;
the results are applied in \cite{pr2}. We then take a closer look at the unique
ramification ideals $I_\cE$ for Galois extensions $\cE=(L|K,v)$ of degree $p=\chara
Kv$ with defect which are computed in \cite{pr1}.

\section{Preliminaries}
For basic facts from valuation theory, see \cite{En,EP,R0,W,ZS2}.
\pars
Take a valued field $(K,v)$. We denote its value group by $vK$, its residue field by
$Kv$, its valuation ring by $\cO_K\,$, with its maximal ideal $\cM_K\,$.
For $a\in K$, we write $va$ for its value and $av$ for its residue.
By $(L|K,v)$ we denote an extension $L|K$ with a valuation $v$ on $L$, where $K$
is endowed with the restriction of $v$. In this case, there are induced embeddings of
$vK$ in $vL$ and of $Kv$ in $Lv$. The extension $(L|K,v)$ is called \bfind{immediate}
if these embeddings are onto.
\pars
By $\tilde{K}$ we denote the algebraic closure of $K$. Every valuation on $K\sep$ has
a unique extension to $\tilde{K}$; this allows us to identify the absolute Galois group
$\Gal K$ with the automorphism group $\Aut\tilde{K}|K$.

%
%
\subsection{Final segments and ideals}           \label{sectfins}
A subset $S$ of a totally ordered set $\Gamma$ is an \bfind{initial segment} of
$\Gamma$ if for every $\alpha\in S$ and every $\gamma\in\Gamma$ with $\gamma \leq
\alpha$, it follows that $\gamma\in S$. Symmetrically, $S$ is a \bfind{final segment}
of $\Gamma$ if for every $\alpha \in S$ and every $\gamma\in
\Gamma$ with $\gamma\geq \alpha$, it follows that $\gamma\in S$. {\it Throughout,
we will assume initial and final segments to be nonempty and not equal to $\Gamma$.}

\pars
Take a valued field $(L,v)$. A subset $I\subset L$ is a \bfind{fractional
${\cal O}_L$-ideal} if there is some $a\in L$ such that $aI$ is an ${\cal O}_L$-ideal
(contained in ${\cal O}_L$). In particular, every fractional ${\cal O}_L$-ideal is a
proper subset of $L$.

The function
\begin{equation}                           \label{vIS}
v:\; I\>\mapsto\> \Sigma_I\>:=\> \{vb\mid 0\ne b\in I\}
\end{equation}
is an order preserving bijection from the set of all nonzero fractional
${\cal O}_L$-ideals onto the set of all final segments of $vL$. This set
is again linearly ordered by
inclusion, and the function (\ref{vIS}) is order preserving: $J\subseteq I$ holds
if and only if $\Sigma_J\subseteq\Sigma_I$ holds. The inverse of the above function
is the order preserving function
\begin{equation}                          \label{vSI}
\Sigma\>\mapsto\> I_\Sigma\>:=\>\{a\in L\mid va\in \Sigma\}\cup\{0\}\>=\>
\{a\in L\mid va\in \Sigma\}\cup\{\infty\}\>
\end{equation}
from the set of all final segments of $vL$ onto the set of all nonzero fractional
${\cal O}_L$-ideals.

%
%
\subsection{The defect}           \label{sectdef}
\mbox{ }\sn
A valued field extension $(L|K,v)$ is \bfind{unibranched} if the
extension of $v$ from $K$ to $L$ is unique. Note that a unibranched extension is
automatically algebraic, since every transcendental extension always admits several
extensions of the valuation. A valued field $(K,v)$ is \bfind{henselian} if it
satisfies Hensel's Lemma, or equivalently, if all of its algebraic extensions are
unibranched. A \bfind{henselization} of $(K,v)$ is an algebraic
extension of $(K,v)$ which admits a valuation preserving embedding in every other
henselian extension of $(K,v)$. Henselizations always exist and are unique up to
valuation preserving isomorphism over $K$; therefore we will talk of {\it the}
henselization of $(K,v)$ and denote it by $(K,v)^h=(K^h,v^h)$. The henselization of
$(K,v)$ is an immediate separable-algebraic extension. The valued field $(K,v)$ is
henselian if and only if it is equal to its henselization.

If $(L|K,v)$ is a finite unibranched extension, then by the Lemma of Ostrowski
\cite[Corollary to Theorem~25, Section G, p.~78]{ZS2}),
\begin{equation}                    \label{feuniq}
[L:K]\>=\> \tilde{p}^{\nu }\cdot(vL:vK)[Lv:Kv]\>,
\end{equation}
where $\nu$ is a non-negative integer and $\tilde{p}$ the
\bfind{characteristic exponent} of $Kv$, that is, $\tilde{p}=\chara Kv$ if it is 
positive and $\tilde{p}=1$ otherwise. The factor $d(L|K,v):=\tilde{p}^{\nu }$ is 
the \bfind{defect} of the extension $(L|K,v)$. We call $(L|K,v)$ a \bfind{defect
extension} if $d(L|K,v) >1$, and a \bfind{defectless extension} if $d(L|K,v)=1$.
Throughout this paper, when we talk of a \bfind{defect extension $(L|K,v)$ of prime
degree}, we will always tacitly assume that it is a unibranched extension. Then it
follows from (\ref{feuniq}) that $[L:K]=p=\chara Kv$ and that $(vL:vK)=1=[Lv:Kv]$,
that is, $(L|K,v)$ is an immediate extension.

Nontrivial defect only appears when $\chara Kv=p>0$, in which case $\tilde{p}=p$.
A henselian field $(K,v)$ is called a \bfind{defectless field} if all of its finite
extensions are defectless.

The following lemma shows that the defect is multiplicative.
This is a consequence of the multiplicativity of the degree of field
extensions and of ramification index and inertia degree. We leave the
straightforward proof to the reader.
\begin{lemma}                                       \label{md}
Take a valued field $(K,v)$. If $L|K$ and $M|L$ are finite extensions
and the extension of $v$ from $K$ to $M$ is unique, then
\begin{equation}         \label{pf}
\mbox{\rm d}(M|K,v) = \mbox{\rm d}(M|L,v)\cdot\mbox{\rm d}(L|K,v)
\end{equation}
In particular, $(M|K,v)$ is defectless if and only if $(M|L,v)$ and
$(L|K,v)$ are defectless.
\end{lemma}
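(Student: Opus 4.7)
The plan is to apply the Lemma of Ostrowski, equation (\ref{feuniq}), to each of the three extensions in the tower and combine with the usual multiplicativities of degree, ramification index, and residue degree.

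First, I would check that Ostrowski is applicable to both $(L|K,v)$ and $(M|L,v)$, i.e.\ that each of these is unibranched. The unique extension of $v$ from $K$ to $M$ restricts to an extension from $K$ to $L$, and any other extension of $v$ from $K$ to $L$ could be extended further to $M$, contradicting uniqueness on $M$; so $(L|K,v)$ is unibranched. Likewise, any two extensions of $v\vert_L$ to $M$ are in particular extensions of $v$ to $M$, so they must agree; hence $(M|L,v)$ is unibranched.

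Next, I would write down the three Ostrowski identities
\begin{align*}
[L:K] &= d(L|K,v)\cdot(vL:vK)\cdot[Lv:Kv], \\
[M:L] &= d(M|L,v)\cdot(vM:vL)\cdot[Mv:Lv], \\
[M:K] &= d(M|K,v)\cdot(vM:vK)\cdot[Mv:Kv],
\end{align*}
and multiply the first two. Using $[M:K]=[M:L][L:K]$ together with the multiplicativity of ramification index, $(vM:vK)=(vM:vL)(vL:vK)$, and of residue degree, $[Mv:Kv]=[Mv:Lv][Lv:Kv]$, the product becomes
\[
[M:K] \;=\; d(M|L,v)\,d(L|K,v)\cdot(vM:vK)\cdot[Mv:Kv].
\]
Comparing with the third Ostrowski identity and cancelling the (finite, nonzero) factors $(vM:vK)$ and $[Mv:Kv]$ yields the desired equality (\ref{pf}). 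The ``in particular'' statement then follows at once: each defect is a nonnegative power of $\tilde p$, so $d(M|K,v)=1$ is equivalent to $d(M|L,v)=d(L|K,v)=1$.

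There is no real obstacle here; the only subtle point is justifying that the intermediate extensions are unibranched so that Ostrowski applies to each, and this is handled in the first paragraph above.
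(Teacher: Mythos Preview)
Your proof is correct and follows exactly the approach the paper indicates: the paper simply states that the lemma is a consequence of the multiplicativity of degree, ramification index, and inertia degree, and leaves the straightforward proof to the reader. Your additional care in verifying that the intermediate extensions are unibranched (so that Ostrowski applies to each) is a valid and useful detail.
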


For every extension $(L|K,v)$ of valued fields and $a\in L$ we define
\[
v(a-K)\>:=\>\{v(a-c)\, |\, c\in K\}\>.
\]
The set $v(a-K)\cap vK$ is an initial segment of $vK$. For more information on its
properties, see \cite{Ku65}.

\begin{lemma}                         \label{K(a)Kh(a)}
Take a unibranched algebraic extension $(K(a)|K,v)$ and an extension of $v$ from $K(a)$
to $\tilde{K}$. Denote by $(K^h,v)$ the henselization of $(K,v)$ in $(\tilde{K},v)$.
Then:
\sn
a) $K(a)|K$ is linearly disjoint from $K^h|K$,
\sn
b) $(K^h(a)|K^h,v)$ is a defect extension if and only if $(K(a)|K,v)$ is, and
\sn
c) $v(a-K^h)=v(a-K)$.
\end{lemma}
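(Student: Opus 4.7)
The plan is to establish (a) via standard henselization theory and then derive (b) and (c) as consequences, with the key identity being $K^h(a)=K(a)^h$, where $K(a)^h$ denotes the henselization of $(K(a),v)$ taken inside $(\tilde{K},v)$.

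For (a), the universal property of henselization, applied to the inclusion $K\hookrightarrow K(a)^h$, produces a unique valuation-preserving embedding $K^h\hookrightarrow K(a)^h$ over $K$; hence $K^h(a)\subseteq K(a)^h$. The key input is the standard fact that for a finite unibranched extension $L|K$ one has $[L^h:K^h]=[L:K]$ (cf.\ \cite{EP}). Applied to $L=K(a)$, this forces the inclusion $K^h(a)\subseteq K(a)^h$ to be an equality, and $[K^h(a):K^h]=[K(a):K]$, which is precisely linear disjointness of $K(a)|K$ from $K^h|K$. In particular one obtains $K(a)\cap K^h=K$: any $a'$ in the intersection generates a subextension $K(a')|K$ that is also linearly disjoint from $K^h|K$, yet already sits in $K^h$, forcing $K(a')=K$.

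Part (b) then follows by applying the Lemma of Ostrowski (\ref{feuniq}) to both $K(a)|K$ and $K^h(a)|K^h$. Since henselizations are immediate, $vK^h=vK$ and $K^hv=Kv$; the identification $K^h(a)=K(a)^h$ analogously yields $vK^h(a)=vK(a)$ and $K^h(a)v=K(a)v$. Combined with $[K^h(a):K^h]=[K(a):K]$ from (a), the ramification index and inertia degree factors in (\ref{feuniq}) match on both sides, so the defects must coincide.

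For (c), the inclusion $v(a-K)\subseteq v(a-K^h)$ is trivial. For the reverse, given $x\in K^h$ with $\gamma:=v(a-x)<\infty$ (so $\gamma\in vK^h=vK$), I would invoke the density of $K$ in $K^h$---the fact that for every $x\in K^h$ and every $\gamma\in vK$ there exists $y\in K$ with $v(x-y)>\gamma$---and conclude via the strict triangle inequality that $v(a-y)=v(a-x)=\gamma$. The edge case $v(a-x)=\infty$ forces $a\in K^h$, whence by (a) $a\in K(a)\cap K^h=K$ and $\infty\in v(a-K)$ trivially. The main obstacle is merely a clean invocation of the two standard but nontrivial inputs from henselization theory---the degree equality under henselization used in (a), and the density of $K$ in $K^h$ used in (c)---both of which are available in \cite{EP}.
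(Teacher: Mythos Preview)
Your arguments for (a) and (b) are correct and essentially coincide with the paper's: the paper cites \cite[Lemma~2.1]{[BK2]} for (a) and argues (b) exactly as you do, via $K^h(a)=K(a)^h$ and immediacy of henselizations.

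The gap is in (c). Your proof rests on the claim that $K$ is dense in $K^h$ in the valuation topology, i.e., that for every $x\in K^h$ and every $\gamma\in vK$ there is $y\in K$ with $v(x-y)>\gamma$. This is true for rank~1 valuations (where $K^h$ embeds in the completion $\hat K$), but it fails in general. For a concrete obstruction, take $K=k(t)((s))$ with value group $\Z\times\Z$ ordered lexicographically, $v(s)=(1,0)$, $v(t)=(0,1)$. The coarsening $w$ given by the first coordinate is complete, hence henselian, with residue field $k(t)$; but $(k(t),v_t)$ is not henselian, so $K^h$ is a proper extension of $K$ whose $w$-residue field is $k(t)^h$. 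Pick $\alpha\in k(t)^h\setminus k(t)$ and any lift $\tilde\alpha\in K^h$. For every $y\in K$ one has $w(\tilde\alpha-y)=0$ (since the $w$-residues differ), so $v(\tilde\alpha-y)\in\{0\}\times\Z$ and in particular $v(\tilde\alpha-y)<(1,0)$ for all $y\in K$. Thus $\tilde\alpha$ cannot be approximated past $(1,0)$, and density fails. Your citation of \cite{EP} for this density is therefore not supportable.

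The paper circumvents this by a different mechanism: assuming $v(a-K^h)\ne v(a-K)$, one finds $z\in K^h$ with $v(a-z)>v(a-K)$, and then invokes \cite[Theorem~2]{Ku34}, an approximation theorem for henselizations which, together with the linear disjointness from (a), rules this out. That theorem is precisely the non-trivial input needed in higher rank; density is too strong a statement and is simply false there.
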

\begin{proof}
Our first assertion follows from \cite[Lemma~2.1]{[BK2]}. For the proof of the second
assertion, recall that henselizations are immediate extensions, so we have $vK^h=vK$
and $K^hv=Kv$. Further, we have $K^h(a)=K(a)^h$ since on the one hand, $K^h(a)$ is
henselain, being an algebraic extension of $K^h$, and on the other hand, it contains
$K(a)$. Hence, $vK^h(a)=vK(a)$ and
$K^h(a)v=K(a)v$. Since $K(a)|K$ is linearly disjoint from $K^h|K$, we also have
$[K^h(a):K^h]=[K(a):K]$. As an algebraic extension of a henselian field, $(K^h(a)|K^h,
v)$ is unibranched. It follows that
\begin{eqnarray*}
d(K^h(a)|K^h,v)&=&[K^h(a):K^h]/(vK^h(a):vK^h)[K^h(a)v:K^hv]\\
&=& [K(a):K]/(vK(a):vK)[K(a)v:Kv] \\
&=& d(K(a)|K,v)\>.
\end{eqnarray*}
This proves our second assertion.

Suppose that $v(a-K^h)\ne v(a-K)$. Since $v(a-K)$ is an initial segment of $vK=vK^h$,
this means that there must be some $z\in K^h$ such that $v(a-z)>v(a-K)$. However, as
$K(a)|K$ is linearly disjoint from $K^h|K$, we know from \cite[Theorem~2]{Ku34} that
this cannot be true. This proves our third assertion.
\end{proof}

%
%
\subsection{The ramification field}          \label{sectrf}
\mbox{ }\sn
In order to reduce the study of arbitrary finite defect extensions to purely
inseparable extensions
and Galois extensions of degree $p=\chara Kv>0$, we fix an extension of $v$ from $K$ 
to $\tilde{K}$. The \bfind{absolute ramification field of
$(K,v)$} (with respect to the chosen extension of $v$), denoted by $(K^r,v)$, is the
ramification field of the Galois extension $(K\sep|K,v)$. The \bfind{ramification field} 
of a Galois extension $(L|K,v)$ with Galois group $G=\Gal(L|K)$ is the fixed field in 
$L$ of the \bfind{ramification group}
\begin{equation}
G^r\>:=\> \left\{\sigma\in G\>\left|\;\> \frac{\sigma b -b}{b}\in \cM_L
\mbox{ \ for all }b\in L^\times\right.\right\}\>.
\end{equation}
Take a finite defect extension $(L|K,v)$. Then $(L.K^r|K^r,v)$ is a
defect extension with the same defect (see \cite[Proposition~2.12]{KuRz}). On the
other hand, $K\sep|K^r$ is a $p$-extension (see \cite[Lemma 2.7]{Ku30}), so $K^r(a)|
K^r$ is a tower of purely inseparable extensions and Galois extensions of degree $p$.
Note that by general ramification theory,
$(K,v)=(K^r,v)$ if and only if $(K,v)$ is henselian, $vK$ is divisible by
all primes different from $\chara Kv$, and $Kv$ is separable-algebraically closed.

%
%
\subsection{Immediate extensions}         
\mbox{ }\sn
Let us give more details about immediate extensions.
\begin{lemma}                              \label{immb-c}
Take an arbitrary extension $(L|K,v)$ and $b\in L$. Then there is $c\in K$ such that 
$v(b-c)>vb$ if and only if $vb\in vK$ and $c'bv\in Kv$ for every $c'\in K$ such that 
$vc'b=0$. 
\end{lemma}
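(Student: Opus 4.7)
The plan is to prove each direction separately by a direct manipulation of values using the ultrametric inequality; this is a short routine argument with no real subtlety.

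For the forward implication, assume $c\in K$ satisfies $v(b-c)>vb$. The ultrametric inequality forces $vc=vb$ (otherwise $v(b-c)=\min\{vb,vc\}\leq vb$), hence $vb\in vK$. Then for any $c'\in K$ with $vc'b=0$ one has $v(c'b-c'c)=vc'+v(b-c)>vc'+vb=0$, so $c'b-c'c\in\cM_L$, whence $(c'b)v=(c'c)v\in Kv$.

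For the backward implication, use $vb\in vK$ to pick some $c'\in K$ with $vc'=-vb$, so that $vc'b=0$. The hypothesis yields $(c'b)v\in Kv$, and hence there exists $d\in\cO_K$ with $dv=(c'b)v$, i.e.\ $v(c'b-d)>0$. Setting $c:=d/c'\in K$ and dividing by $c'$ gives $v(b-c)=v(c'b-d)-vc'>-vc'=vb$, as desired.

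I do not foresee any real obstacle; the proof uses nothing beyond the ultrametric inequality and the definitions of $vK$ and $Kv$. A minor observation worth recording is that if $c',c''\in K$ both satisfy $vc'b=vc''b=0$, then $c'/c''\in\cO_K^\times$, so the conclusion $(c'b)v\in Kv$ is independent of the choice of $c'$; therefore the universal quantifier in the hypothesis is essentially cosmetic, and it suffices to verify the condition for a single $c'$ in the backward direction --- which is what the argument above does.
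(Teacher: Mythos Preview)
Your proof is correct and is essentially identical to the paper's: both directions proceed by the same direct value computations, with your $c'$ and $d$ in the backward direction playing the roles of the paper's $c_1^{-1}$ and $c_2$, and your $c=d/c'$ corresponding to the paper's $c_1c_2$. Your closing remark that the universal quantifier over $c'$ is cosmetic is a useful observation not made explicit in the paper.
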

\begin{proof}
Assume first that $v(b-c)>vb$. Then $vb=vc\in vK$ and for any $c'\in K$ such that 
$vc'b=0$ we have $v(c'b-c'c)>0$ so that $c'bv=c'cv\in Kv$. Now assume that $vb\in vK$ and 
$c'bv\in Kv$ for every $c'\in K$ such that $vc'b=0$. Take $c_1\in K$ such that $vc_1=vb$
and set $c'=c_1^{-1}$. Then $vc'b=0$, hence by assumption, $c'bv\in Kv$. Take $c_2\in 
K$ such that $c'bv=c_2v$, so that $v(c'b-c_2)>0$. Multiplying with $c_1$ we obtain
$v(b-c_1c_2)>vc_1=vb$.
\end{proof}
It follows that an extension $(L|K,v)$ is immediate if and only if for all
$b\in L$ there is $c\in K$ such that $v(b-c)>vb$. This lays the basis for the proof of 
the following theorem; see \cite[Theorem 1]{K} and \cite[Lemma 2.29]{Ku65}.
\begin{theorem}                                              \label{CutImm}
If $(L|K,v)$ is an immediate extension of valued fields, then for every element 
$a\in L\setminus K$ the set $v(a-K)$ is an initial segment of $vK$ without maximal 
element.
\end{theorem}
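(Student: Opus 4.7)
The plan is to first verify that $v(a-K)\subseteq vK$ and that it is an initial segment of $vK$, and then to show (as the main content) that any supposed maximal element leads to a contradiction via Lemma \ref{immb-c}.

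For the initial segment property, since $a\notin K$ every difference $a-c$ is nonzero, so $v(a-c)\in vL$, and by immediacy $vL=vK$, giving $v(a-K)\subseteq vK$. To see it is downward closed in $vK$, take $\alpha=v(a-c)\in v(a-K)$ and $\beta\in vK$ with $\beta<\alpha$. Choose $d\in K$ with $vd=\beta$. Since $v(a-c)=\alpha>\beta=vd$, the ultrametric triangle inequality with strict inequality yields $v(a-c-d)=\beta$, so $\beta=v(a-(c+d))\in v(a-K)$.

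For the second and main assertion, I would argue by contradiction. Suppose that $v(a-K)$ has a maximal element, realized by some $c_0\in K$, and set $b:=a-c_0$. Then $b\in L\setminus K$, since $a\notin K$, and $vb$ is maximal in $v(b-K)=v(a-K)$ (shifting by $c_0$ permutes $K$). The plan is to apply Lemma \ref{immb-c} to $b$ and deduce the existence of $c_1\in K$ with $v(b-c_1)>vb$; then $v(a-(c_0+c_1))>v(a-c_0)$ contradicts maximality.

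The hypotheses of Lemma \ref{immb-c} are verified directly from immediacy: first, $vb\in vL=vK$; second, for any $c'\in K$ with $vc'b=0$ we have $c'b\in\cO_L$, and its residue lies in $Lv=Kv$, again by immediacy. So both conditions of the lemma hold and the desired $c_1$ exists. There is no real obstacle here; the only thing to be careful about is that the immediacy hypothesis must be used twice (once for values, once for residues) to match the two conditions of Lemma \ref{immb-c}, and that the maximality assumption is translated, without loss of generality, to the statement that $vb$ is maximal in $v(b-K)$ via the shift $b=a-c_0$.
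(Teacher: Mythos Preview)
Your proof is correct and follows precisely the approach the paper indicates: the paper does not write out a proof but states, right before the theorem, that the characterization ``$(L|K,v)$ is immediate if and only if for all $b\in L$ there is $c\in K$ such that $v(b-c)>vb$'' (derived from Lemma~\ref{immb-c}) ``lays the basis for the proof,'' and then cites \cite[Theorem 1]{K} and \cite[Lemma 2.29]{Ku65}. Your argument is exactly this intended proof, carried out in full.
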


The following partial converse of this theorem also holds 
(see~\cite[Lemma 4.1]{Bl3}, cf.\ also~\cite[Lemma 2.21]{Ku30}):
\begin{lemma}                                   \label{imm_deg_p}
Assume that $(K(a)|K,v)$ is a unibranched extension of prime degree such 
that $v(a-K)$ has no maximal element. Then the
extension $(K(a)|K,v)$ is immediate and hence a defect extension.
\end{lemma}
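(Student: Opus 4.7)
The plan is to argue by contradiction. Applying the Lemma of Ostrowski (\ref{feuniq}) to the unibranched prime-degree extension $(K(a)|K,v)$ gives $p = d(K(a)|K,v)\cdot (vK(a):vK) \cdot [K(a)v:Kv]$, so either $(K(a)|K,v)$ is immediate (and then $d(K(a)|K,v) = p > 1$ is automatic) or else it is defectless with exactly one of $(vK(a):vK) = p$ or $[K(a)v:Kv] = p$. I would assume the latter and derive a maximum of $v(a-K)$, contradicting the hypothesis.

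In both defectless subcases the plan is to produce a $K$-basis of $K(a)$ of the form $\{1, b, b^2, \ldots, b^{p-1}\}$ which is a \emph{valuation basis}, in the sense that $v\bigl(\sum_{i=0}^{p-1} c_i b^i\bigr) = \min_i v(c_i b^i)$ for every tuple $(c_i) \in K^p$. In the case $(vK(a):vK) = p$, pick $b \in K(a)$ with $vb \notin vK$; then $vb$ has order $p$ in the cyclic group $vK(a)/vK$, so the cosets $i \cdot vb + vK$ for $0 \le i \le p-1$ are pairwise distinct, and the valuation basis property follows because values lying in distinct $vK$-cosets cannot cancel. In the case $[K(a)v:Kv] = p$, pick $b \in \cO_{K(a)}$ with $bv \notin Kv$; primality forces $Kv(bv) = K(a)v$, so $1, bv, \ldots, (bv)^{p-1}$ are $Kv$-linearly independent, which after rescaling to value zero translates into the valuation basis property for $\{1, b, \ldots, b^{p-1}\}$.

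Given such a basis, write $a = \sum_{i=0}^{p-1} c_i b^i$ with $c_i \in K$. Since $v(a-K)$ has no maximum, in particular $a \notin K$, so some $c_i$ with $i \ge 1$ is nonzero, and $\gamma := \min_{1 \le i \le p-1}(vc_i + i\cdot vb)$ is a well-defined element of $vK(a)$. For any $c \in K$, the valuation basis property yields $v(a - c) = \min(v(c_0 - c),\, \gamma) \le \gamma$, with equality realized by $c = c_0$. Hence $\gamma$ is the maximum of $v(a-K)$, the desired contradiction. Therefore $(K(a)|K,v)$ is immediate, and Ostrowski then gives $d(K(a)|K,v) = p > 1$, so the extension is a defect extension.

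The main obstacle is the verification of the valuation basis property in the two defectless subcases, which require genuinely different arguments (distinctness of cosets in $vK(a)/vK$ versus $Kv$-linear independence of residues). Once the valuation basis is in hand, the final step producing the maximal element $v(a-c_0) = \gamma$ of $v(a-K)$ is immediate from the definition of a valuation basis.
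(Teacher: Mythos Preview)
Your proof is correct and follows essentially the same approach that the paper develops in the surrounding lemmas: the paper cites external references for this lemma, but your argument is precisely the contrapositive obtained by combining Lemma~\ref{vbdegp} (construction of the valuation basis $1,b,\ldots,b^{p-1}$ in the two defectless subcases) with Lemma~\ref{apprdle} (the valuation basis forces $v(a-K)$ to have the maximum $v(a-c_0)$). Your two subcase verifications of valuation independence are exactly those underlying Lemma~\ref{vbdegp}, and your final computation with $\gamma$ is the content of the proof of Lemma~\ref{apprdle}.
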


\mn
%
%
\subsection{Higher ramification groups and ramification ideals}       \label{secthrg}
\mbox{ }\sn
Take a valued field extension $\cE=(L|K,v)$. Assume that $L|K$ is a Galois extension,
and let $G=\Gal L|K$ denote its Galois group. We define the \bfind{series of upper
ramification groups}
\begin{equation}
G_I\>:=\>\left\{\sigma\in G\>\left|\;\> \frac{\sigma b -b}{b}\in I
\mbox{ \ for all }b\in L^\times\right.\right\}\>,
\end{equation}
where $I$ runs through all ${\cal O}_L$-ideals (cf.\ \cite[\S12]{ZS2}). Note that
$G_{\cM_L}=G^r$ is the ramification group and $G_{\cO_L}$ is the decomposition group
of $(L|K,v)$. Every $G_I$ is a normal subgroup of $G$ (\cite[(d) on p.79]{ZS2}).
We call $G_I$ a \bfind{higher ramification group} if it is a nontrivial subgroup of
$G_{\cM_L}\,$. We call $\cE$ a \bfind{purely wild extension} if $G=G_{\cM_L}$;
this matches the (more general) definition of ``purely wild extension'' in \cite{Ku2}.
%

\pars
The function
\begin{equation}                            \label{eq12}
\varphi:\; I\>\mapsto\>G_I
\end{equation}
from the set of ${\cal O}_L$-ideals to the set of upper ramification groups
preserves $\subseteq$, that is, if $I\subseteq J$, then $G_I\subseteq G_J\,$.
As ${\cal O}_L$ is a valuation ring, the set of its ideals is linearly
ordered by inclusion. This shows that also the series of upper ramification
groups is linearly ordered by inclusion. Note that in general, $\varphi$ will neither
be injective nor surjective as a function to the set of normal subgroups of~$G$. This
gives rise to the task to determine the smallest ideal that is sent by $\varphi$ to
a group $G_I$ in its image. To this end, for each subgroup $H$ of $G_{\cO_L}$ we
define the $\cO_L$-ideal
\begin{equation}                            \label{genI}
I_H\>:=\>\left(\frac{\sigma b -b}{b}\>\left|\;\> \sigma\in H\,,\>b\in L^\times\right.
\right)\>=\>\left(\frac{\sigma b}{b}-1\>\left|\;\> \sigma\in H\,,\>b\in L^\times\right.
\right)\>
\end{equation}
and consider the function
\begin{equation}                            \label{eq13}
\psi:\; H\>\mapsto\>I_H
\end{equation}
from the set of all subgroups $H$ of $G_{\cO_L}$ to the set of all
${\cal O}_L$-ideals. Also
$\psi$ preserves $\subseteq$ and is in general neither injective
nor surjective. However, it is easy to see that $G_{(0)}=\{\rmid\}$ and
$I_{\{\trmid\}}=(0)$. If $I_H$ is nonzero and contained in $\cM_L\,$, then we call
it a \bfind{ramification ideal}. We note:
\begin{proposition}                      \label{ris}
1) For every ${\cal O}_L$-ideal $I$, the group $G_I$ is the largest of
all subgroups $H'$ of $G$ such that $I_{H'}\subseteq I$.
\sn
2) For every subgroup $H$ of $G$, the ideal $I_H$ is the smallest of all
$\cO_L$-ideals $I'$ such that $H\subseteq G_{I'}\,$.
\sn
3) If $I=I_H$ for some subgroup $H$ of $G$, then $I_{G_I}=I$. If $H=G_I$ for some
$\cO_L$-ideal $I$, then $G_{I_H}=H$. Hence $\varphi$ is an inclusion preserving
bijection from the set of all $\cO_L$-ideals onto the set of all upper ramification
groups, with $\psi$ its inverse.

\sn
4) The function $\varphi$ induces an inclusion preserving bijection from the set of
all ramification ideals onto the set of all higher ramification groups,
with its inverse induced by $\psi$.
%
%

\sn
5) A subgroup $H$ of $G$ is an upper ramification group if and only if it is a
subgroup of $G_{\cO_L}$ and for every subgroup $H'$ of $G_{\cO_L}$ we have
$H\subsetneq H'\Rightarrow I_H\subsetneq I_{H'}\,$.

\sn
6) An $\cO_L$-ideal $I$ is a ramification ideal if and only if it is nonzero and
contained in $\cM_L$ and for every $\cO_L$-ideal $I'$ we have $I'\subsetneq I
\Rightarrow G_{I'}\subsetneq G_I\,$.

\sn
7) If $\cE=(L|K,v)$ is a nontrivial purely wild Galois extension, then $I_G$ is its
largest ramification ideal. If in addition $\cE$ is of prime degree, then $I_G$ is
its unique ramification ideal.
\end{proposition}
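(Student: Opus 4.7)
The plan is to establish a single Galois-type equivalence and then derive all seven parts from it by formal manipulation. The key observation, immediate from the definitions, is that for any subgroup $H'$ of $G$ and any $\cO_L$-ideal $I$,
\[
H' \subseteq G_I \iff I_{H'} \subseteq I, \qquad (\ast)
\]
since both sides unpack to the same condition: $(\sigma b - b)/b \in I$ for all $\sigma \in H'$ and $b \in L^\times$. Thus the maps $\varphi$ and $\psi$ form an antitone-style adjunction; the entire proposition is the corresponding ``Galois correspondence''.

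Parts 1) and 2) are then immediate: $G_I$ is the largest subgroup contained in $G_I$, which by $(\ast)$ is the largest $H'$ with $I_{H'} \subseteq I$; dually for $I_H$. For part 3), I would apply $(\ast)$ twice. Applied to the pair $(H,I_H)$ it gives $H \subseteq G_{I_H}$, hence by monotonicity of $\psi$ we get $I_H \subseteq I_{G_{I_H}}$; applied to the pair $(G_I, I)$ it gives $I_{G_I} \subseteq I$. Combining these yields $I_{G_I} = I$ whenever $I = I_H$, and symmetrically $G_{I_H} = H$ whenever $H = G_I$. This shows that $\varphi$ and $\psi$ restrict to mutually inverse, inclusion-preserving bijections between the ideals occurring as some $I_H$ and the subgroups occurring as some $G_I$, i.e.\ the ramification groups.

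Part 4) will follow by observing, again from $(\ast)$, that $I_H \subseteq \cM_L$ is equivalent to $H \subseteq G_{\cM_L}$, so the bijection of part 3) restricts to one between nontrivial higher ramification groups and ramification ideals (the trivial pair $(\{\rmid\}, (0))$ being excluded on both sides). For parts 5) and 6), I plan to verify the strict-monotonicity characterization directly: if $H$ is a ramification group and $H \subsetneq H'$, then $H' \not\subseteq G_{I_H} = H$ forces $I_H \subsetneq I_{H'}$ via $(\ast)$; conversely, if $H$ is not a ramification group, then $H \subsetneq G_{I_H}$, and choosing $H' = G_{I_H}$ violates the strict-monotonicity condition because $I_{G_{I_H}} = I_H$ by part 3). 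Part 6) is obtained by interchanging the roles of $H$ and $I$ in the same argument.

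Finally, for part 7), monotonicity of $\psi$ yields $I_H \subseteq I_G$ for every subgroup $H$, so $I_G$ is the largest ideal of the form $I_H$. When $\cE$ is purely wild, $G = G_{\cM_L}$, so $(\ast)$ applied to $(G, \cM_L)$ gives $I_G \subseteq \cM_L$; nontriviality of $G$ rules out $I_G = (0)$, because $G_{(0)} = \{\rmid\}$ always. Hence $I_G$ is the largest ramification ideal. In prime degree the subgroup lattice of $G$ has only the two elements $\{\rmid\}$ and $G$, giving only $(0)$ and $I_G$ as possible ideals $I_H$, so $I_G$ is the unique ramification ideal. The whole argument is essentially formal once $(\ast)$ is in place; the only point demanding mild care is keeping track of the side-conditions (``nontrivial'' and ``$\subseteq \cM_L$'') that separate ramification ideals from arbitrary ideals $I_H$, and similarly on the group side.
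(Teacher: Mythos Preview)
Your proof is correct and follows essentially the same approach as the paper's. Both arguments rest on the Galois-connection equivalence you call $(\ast)$; the paper leaves it implicit (``parts 1) and 2) follow directly from the definitions'') while you isolate it explicitly, but the subsequent derivations of parts 3)--7) proceed identically via the closure identities $I_{G_{I_H}}=I_H$ and $G_{I_{G_I}}=G_I$.
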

\begin{proof}
1) and 2) follow directly from the definitions of $G_I$ and $I_H\,$.
\sn
3): If $I=I_H$, then it follows from part 1) that $H\subseteq G_I$.
Thus $I=I_H\subseteq I_{G_I}\subseteq I$, so $I_{G_I}=I$.
If $H=G_I$, then it follows from part 2) that $I_H\subseteq I$. Thus
$H\subseteq G_{I_H}\subseteq G_I=H\,$, so $G_{I_H}=H$.

\sn
4): If $I_H$ is a ramification ideal, then $I_H$ is nonzero and contained in
$\cM_L\,$, hence $H$ is nontrivial and by part 3), $H=G_{I_H}\subseteq G_{\cM_L}\,$,
so $G_{I_H}$ is a higher ramification group. This shows that $\varphi$ sends
ramification ideals to higher ramification groups.

If $G_I$ is a higher ramification group, then $G_I\subseteq G_{\cM_L}$, hence again
by part 3), $I=I_{G_I}\subseteq I_{G_{\cM_L}}=\cM_L\,$, and since $G_I$ is nontrivial,
$I=I_{G_I}$ is nonzero. This shows that $\psi$ sends higher
ramification groups to ramification ideals.
Now the assertion of part 4) follows from part 3).

\sn
5): Assume first that $H$ is an upper ramification group, and take an $\cO_L$-ideal
$I$ such that $H=G_I\,$. Take a subgroup $H'$ of $G_{\cO_L}$ which properly contains
$G_I\,$. Then by part 1), $I_H=I_{G_I}\subsetneq I_{H'}\,$.

For the converse, assume that $H$ is a subgroup of $G_{\cO_L}$ such that for every
subgroup $H'$ of $G_{\cO_L}$ we have $H\subsetneq H'\Rightarrow I_H\subsetneq I_{H'}\,$.
By part 1), $G_{I_H}$ is the largest of all subgroups $H'$ of $G_{\cO_L}$ such that
$I_{H'}\subseteq I_H\,$. Therefore $G_{I_H}=H$, which shows that $H$ is an upper
ramification group.

\sn
6): It suffices to show that there is a subgroup $H$ of $G_{\cO_L}$ such that $I=I_H$ if and
only if for every $\cO_L$-ideal $I'$ we have $I'\subsetneq I\Rightarrow G_{I'}
\subsetneq G_I\,$.

Assume first that $I=I_H\,$. Take an $\cO_L$-ideal $I'$ properly contained in
$I_H\,$. Then by parts 2) and 3), $G_{I'}\subsetneq H=G_{I_H}=G_I\,$.

For the converse, assume that $I$ is an $\cO_L$-ideal such that for every
$\cO_L$-ideal $I'$ we have $I'\subsetneq I\Rightarrow G_{I'}\subsetneq G_I\,$. By
part 2), $I_{G_I}$ is the smallest of all $\cO_L$-ideals $I'$ such that $G_I
\subseteq G_{I'}\,$. Therefore $I=I_{G_I}\,$.

\sn
7): Since $\cE$ is a nontrivial purely wild extension, also $G=G_{\cM_L}$ is nontrivial,
which by definition of $I_G$ implies that $I_G$ is nonzero. Since $G=G_{\cM_L}$, we
have $I_G\subseteq \cM_L\,$. Thus $I_G$ is a ramification ideal. As $\psi$ preserves
inclusion, $I_G$ is the largest ramification ideal of $\cE$.

If in addition $\cE$ is of prime degree, then the only subgroups of $G$ are $G$ and
$\{\rmid\}$. Since $I_{\{\trmid\}}=(0)$ is not a ramification ideal, $I_G$ is then the
unique ramification ideal of $\cE$.
\end{proof}

The upper ramification groups can be represented as
\[
G_\Sigma\>:=\>G_{I_\Sigma}\>=\>
\left\{\sigma\in G\>\left|\;\> v\,\frac{\sigma b-b}{b}\in \Sigma\cup\{\infty\}
\mbox{ \ for all }b\in L^\times\right.\right\} \>,
\]
where $\Sigma$ runs through all final segments of $(vL)^{> 0}$ and $\emptyset$.

Like the function (\ref{eq12}), also the function $\Sigma\mapsto G_\Sigma$ is in 
general neither injective nor surjective. We call a final segment $\Sigma$
of $(vL)^{> 0}$ a \bfind{ramification jump} if and only if
\[
\Sigma'\subsetneq\Sigma\>\Rightarrow\> G_{\Sigma'}\subsetneq G_\Sigma\>
\]
for every final segment $\Sigma'$ of $(vL)^{> 0}$.
\begin{proposition}                         \label{rj}
Take a nontrivial purely wild Galois extension $\cE=(L|K,v)$. Then a nonempty final
segment $\Sigma$ of $(vL)^{> 0}$ is a ramification jump if and only if $I_\Sigma$
is a ramification ideal.
\end{proposition}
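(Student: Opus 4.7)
The plan is to leverage the order-preserving bijection $\Sigma \mapsto I_\Sigma$ described just before the statement, together with the intrinsic characterization of ramification ideals given in part 6) of Proposition~\ref{ris}. Since $G_\Sigma = G_{I_\Sigma}$ by definition, this correspondence restricts to a bijection between the nonempty final segments $\Sigma$ of $(vL)^{>0}$ and the nontrivial $\cO_L$-ideals contained in $\cM_L$: nonemptiness of $\Sigma$ matches $I_\Sigma \ne (0)$, and $\Sigma \subseteq (vL)^{>0}$ matches $I_\Sigma \subseteq \cM_L$. The empty final segment pairs with the zero ideal, and $G_\emptyset = G_{(0)} = \{\rmid\}$.

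For the forward direction, I would assume that $\Sigma$ is a ramification jump and verify the criterion of part 6) of Proposition~\ref{ris} for $I_\Sigma$. Nontriviality and containment in $\cM_L$ are immediate from the bijection. For an arbitrary $\cO_L$-ideal $I' \subsetneq I_\Sigma$, I split into two cases: if $I' \ne (0)$, then $I' = I_{\Sigma'}$ for a nonempty final segment $\Sigma'$ of $vL$, and $I' \subsetneq I_\Sigma \subseteq \cM_L$ forces $\Sigma' \subsetneq \Sigma$ inside $(vL)^{>0}$, so the ramification jump hypothesis yields $G_{\Sigma'} \subsetneq G_\Sigma$; if $I' = (0)$, then applying the ramification jump hypothesis to $\Sigma' = \emptyset$ gives $\{\rmid\} = G_\emptyset \subsetneq G_\Sigma$. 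In either case $G_{I'} \subsetneq G_{I_\Sigma}$.

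The converse direction is symmetric: assuming $I_\Sigma$ is a ramification ideal, I take any final segment $\Sigma' \subsetneq \Sigma$ of $(vL)^{>0}$, pass to the $\cO_L$-ideal $I_{\Sigma'}$ (which equals $(0)$ when $\Sigma' = \emptyset$), and apply part 6) of Proposition~\ref{ris} directly to obtain $G_{I_{\Sigma'}} \subsetneq G_{I_\Sigma}$, that is, $G_{\Sigma'} \subsetneq G_\Sigma$.

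The only subtlety---not really an obstacle---is that the two definitions quantify over slightly different sets: ramification jumps over final segments of $(vL)^{>0}$, ramification ideals over all $\cO_L$-ideals. This is reconciled by the observation that any $\cO_L$-ideal properly contained in an ideal of $\cM_L$ is itself contained in $\cM_L$, together with the separate treatment of the empty segment and zero ideal case outlined above. The purely wild hypothesis does not appear to be invoked in the bookkeeping; it presumably fixes the ambient setting in which ramification jumps are the objects of interest.
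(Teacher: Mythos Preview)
Your proof is correct and follows essentially the same route as the paper: translate the ramification-jump condition through the order-preserving bijection $\Sigma\mapsto I_\Sigma$ and invoke part~6) of Proposition~\ref{ris}. You are in fact slightly more careful than the paper's argument, which quantifies only over nonempty $\Sigma'$ and nontrivial $I'$ without explicitly noting that the empty/zero case is harmless; your separate treatment of $\Sigma'=\emptyset$ and $I'=(0)$ closes that bookkeeping gap. Your observation that the purely wild hypothesis is not actually invoked in the argument is also accurate.
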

\begin{proof}
First note that for every nonempty final segment $\Sigma$ of $(vL)^{> 0}$ the ideal
$I_\Sigma$ is nonzero, and contained in $\cM_L$ by our assumption on $\cE$. Now
a nonempty final segment $\Sigma$ of $(vL)^{> 0}$ is a ramification jump if and
only if for every nonempty final segment $\Sigma'$ of $(vL)^{> 0}$ we have $\Sigma'
\subsetneq \Sigma \Rightarrow G_{I_{\Sigma'}}\subsetneq G_{I_\Sigma}$. This holds
if and only if for every nonzero $\cO_L$-ideal $I'$ we have $I'\subsetneq
I_\Sigma\Rightarrow G_{I'}\subsetneq G_{I_\Sigma}$. By Proposition~\ref{ris},
this in turn holds if and only if $I_\Sigma$ is a ramification ideal.
\end{proof}

By Propositions~\ref{ris} and~\ref{rj}, the number of ramification ideals and
ramification jumps in a purely wild Galois extension is bounded by the number of
nontrivial subgroups of its Galois group. It may not always be equal to this number,
as an example given in Section~\ref{sect2ex} below will show. For computations of the
number of ramification ideals in finite Galois extensions, see \cite{DN}.

\pars
In this paper we are particularly interested in the case where $\cE=(L|K,v)$ is a
purely wild Galois extension of prime degree $p$. Then by Lemma~\ref{ris}, $\cE$ has
the unique ramification ideal $I_G\,$, and we denote it by $I_\cE\,$. Hence
$\Sigma_\cE:=\Sigma_{I_\cE}$ is the unique ramification jump of $\cE$. As we will
show in the next section, ramification jump and ramification
ideal carry important information about the extension $(L|K,v)$.

\begin{remark}
\rm In \cite[Section 2.1]{KuRz} we also included zero ideals and empty segments in the
definitions of the functions (\ref{vIS}) and (\ref{vSI}). However, classically
ramification jumps have always been defined as integers in the case of discrete 
valuations, and as real numbers in the case of valuations of rank one, and the
intended meaning of ``jump'' does not fit well with the value $v0=\infty$.
\end{remark}

\pars
Further, we want to quickly discuss the \bfind{series of lower ramification groups}
\begin{equation}
G^l_I\>:=\>\left\{\sigma\in G\>\left|\;\> \sigma b -b\in I
\mbox{ for all }b\in {\cal O}_L\right.\right\} \>,
\end{equation}
where $I$ runs through all ${\cal O}_L$-ideals (see \cite[\$12]{ZS2}). Note that
$G^l_{\cM_L}$ is the inertia group of $(L|K,v)$. Again, for every ${\cal O}_L$-ideal
$I$, $G^l_I$ is a normal subgroup of $G$ (see \cite[(d) on p.79]{ZS2}), and $G_I
\subseteq G^l_I$ (see \cite[(a) on p.78]{ZS2}).
But in the case of an immediate extension $(L|K,v)$, the two groups coincide, as
follows from the next, more general, result:
\begin{lemma}
If $vL=vK$, then $G_I = G^l_I$ for all nonzero ideals $I$ of
${\cal O}_L$ contained in $\cM_L\,$.
\end{lemma}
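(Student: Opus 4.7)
The inclusion $G_I\subseteq G^l_I$ is already noted in the paragraph preceding the lemma, so the task is to prove the reverse inclusion under the assumption $vL=vK$. My plan is to take an arbitrary $\sigma\in G^l_I$ and an arbitrary $b\in L^\times$, and then reduce the computation of $(\sigma b-b)/b$ to the case of a unit $u\in\cO_L^\times$, for which the defining conditions of $G_I$ and $G^l_I$ collapse into one another.

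Concretely, given $b\in L^\times$, the hypothesis $vL=vK$ lets me choose $c\in K^\times$ with $vc=vb$. Setting $u:=b/c$ then gives $vu=0$, so $u\in\cO_L^\times$ and in particular $u^{-1}\in\cO_L$. Because $\sigma$ fixes $K$ pointwise, $\sigma b=\sigma(cu)=c\sigma u$, which yields
\[
\frac{\sigma b-b}{b}\>=\>\frac{c(\sigma u-u)}{cu}\>=\>\frac{\sigma u-u}{u}\>=\>u^{-1}(\sigma u-u)\>.
\]
Since $u\in\cO_L$ and $\sigma\in G^l_I$, we have $\sigma u-u\in I$, and multiplying by $u^{-1}\in\cO_L$ keeps us inside the ideal $I$. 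Hence $(\sigma b-b)/b\in I$ for every $b\in L^\times$, which shows $\sigma\in G_I$ and completes the argument.

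There is no real obstacle here; the only ingredient beyond the definitions is the fact that $vL=vK$ permits the decomposition $b=cu$ with $c\in K^\times$ and $u\in\cO_L^\times$. Note that the assumptions that $I$ be nontrivial and contained in $\cM_L$ are not used in the argument; any $\cO_L$-ideal would do, but those restrictions match the situation where one is actually interested in ramification groups, namely $G_I\subseteq G_{\cM_L}$.
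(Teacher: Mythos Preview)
Your proof is correct and follows essentially the same approach as the paper: both arguments use $vL=vK$ to multiply $b$ by an element of $K^\times$ so as to land in $\cO_L^\times$, then exploit that for a unit the quotient $(\sigma u-u)/u$ lies in $I$ whenever $\sigma u-u$ does. Your closing remark that the hypotheses on $I$ are not actually needed is also correct.
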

\begin{proof}
It suffices to show that $G^l_I\subseteq G_I\,$. Take $\sigma\in G^l_I$
and $b\in L^\times$. Since $vL=vK$, we can pick some
$c\in K$ such that $vcb=0$. As $\sigma\in G^l_I\,$, we have that
$\sigma(cb)-cb \in I$. Since $vcb=0$, it follows that
\[
\frac{\sigma b -b}{b} \>=\> \frac{\sigma (cb) -cb}{cb} \>\in\>I\>.
\]
This shows that $\sigma\in G_I\,$.
\end{proof}

\mn
%
%
\subsection{Valuation bases}           \label{sectvb}
\mbox{ }\sn
Take an extension $(L|K,v)$. The elements $b_1,\ldots,b_n\in L$ are called
\bfind{valuation independent} (over $K$) if for all choices of $c_1,\ldots,c_n\in K$,
\[
v\sum_{i=1}^nc_i b_i\>=\>\min_i vc_i b_i\;.
\]
If in addition these elements form a basis of $L|K$, then they are called a
\bfind{valuation basis} of $(L|K,v)$.
If the valuation basis contains 1, we will speak of a \bfind{valuation basis with 1}. 

Recall that a unibranched extension $(L|K,v)$ is defectless if it satisfies the
fundamental equality $[L:K]={\rm e} \cdot {\rm f}$, where e$\,=(vL:vK)$
is the ramification index and f$\,= [Lv:Kv]$ is the inertia degree. In
this case, $(L|K,v)$ admits a \bfind{standard valuation basis}, which we
construct as follows: we take $y_1,\ldots,y_{\rm e}\in L$ such that
$vy_1+vK, \ldots, vy_{\rm e}+vK$ are the cosets of $vK$ in $vL$, and
$z_1,\ldots,z_{\rm f}\in L$ such that $z_1v,\ldots,z_{\rm f}v$ are a
basis of $Lv|Kv$. Then the products $y_iz_j\,$, $1\leq i\leq\,$e, $1\leq j\leq\,$f,
form a valuation basis of $(L|K,v)$ (see \cite[Lemma~3.2.2]{EP}). Note that we can 
always choose $y_1=z_1=1$ so that $y_1z_1=1$. We will then speak of a 
\bfind{standard valuation basis with 1}. 

The next result has been shown in the proof of \cite[Lemma~2.1]{Ku30}.
\begin{lemma}                            \label{vbdegp}
Take an extension $(L|K,v)$ of prime degree $p$. If for $b\in L$, either $vb\notin vK$
or there is some $c\in K$ such that $vcb=0$ and $cbv\notin Kv$, then $1,b,\ldots,
b^{p-1}$ forms a standard valuation basis with 1 of $(L|K,v)$. 
\end{lemma}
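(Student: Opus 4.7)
The plan is to split on the two disjunctive hypotheses and in each case invoke Ostrowski's identity (\ref{feuniq}) to pin down the ramification index and residue degree, then verify the valuation independence of $1,b,\ldots,b^{p-1}$ directly. Throughout I use the elementary fact that if the nonzero summands have pairwise distinct values then $v(\sum)=\min v$.

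Suppose first that $vb\notin vK$. Then $(vL:vK)>1$, so from $[L:K]=p$ together with (\ref{feuniq}) the only possibility is $(vL:vK)=p$ and $\tilde p^\nu=[Lv:Kv]=1$. Hence $vb+vK$ has order $p$ in the cyclic group $vL/vK$, so the values $0,vb,2vb,\ldots,(p-1)vb$ lie in pairwise distinct cosets of $vK$. For any $(c_0,\ldots,c_{p-1})\in K^p$ not all zero, the nonzero summands $c_ib^i$ therefore have pairwise distinct values $vc_i+ivb$, forcing $v\sum c_ib^i=\min_i v(c_ib^i)$. Taking $y_i:=b^{i-1}$ as a transversal for $vL/vK$ and $z_1:=1$ as the (one-element) residue basis then realises $1,b,\ldots,b^{p-1}$ as a standard valuation basis with $1$.

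Suppose next that there is $c\in K$ with $vcb=0$ and $cbv\notin Kv$. Put $b':=cb$ and $\beta:=b'v\in Lv\setminus Kv$. Since $[Lv:Kv]\leq p$ by (\ref{feuniq}), we get $[Lv:Kv]=p$ with $Kv$-basis $1,\beta,\ldots,\beta^{p-1}$, and correspondingly $(vL:vK)=\tilde p^\nu=1$; in particular $vb=-vc\in vK$. Setting $a_i:=c_ic^{-i}$ we have $\sum c_ib^i=\sum a_i(b')^i$ with $v(c_ib^i)=va_i$. Let $\gamma:=\min_iva_i$ be realised at some index $i_0$. Then $a_{i_0}^{-1}\sum a_i(b')^i$ has all coefficients in $\cO_K$ with the $i_0$-th one equal to $1$, so its residue $\sum (a_i/a_{i_0})v\cdot \beta^i$ is a nonzero polynomial in $\beta$ over $Kv$ of degree less than $p$. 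Since $\beta$ has degree exactly $p$ over $Kv$, this polynomial cannot vanish at $\beta$; hence the residue is nonzero and $v\sum c_ib^i=\gamma=\min_i v(c_ib^i)$. The associated standard valuation basis with $1$ is realised by $y_1:=1$ and $z_j:=(b')^{j-1}$, whose products agree with $1,b,\ldots,b^{p-1}$ up to the $K^\times$-scalars $c^{-(j-1)}$.

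The only nontrivial step is the residue-level nonvanishing in the second case, which rests on $\beta$ having degree exactly $p$ over $Kv$; everything else is bookkeeping with Ostrowski's identity and the distinct-values principle. I do not anticipate any further obstacle.
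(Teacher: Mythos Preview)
Your argument is correct. The paper does not give its own proof of this lemma; it simply records that the result ``has been shown in the proof of \cite[Lemma~2.1]{Ku30}''. Your two-case argument via Ostrowski's identity (\ref{feuniq}) is the standard one and is exactly what that reference contains.

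Two small remarks. First, in the second case you write ``$[Lv:Kv]\leq p$ by (\ref{feuniq})'', but what you actually need (and what (\ref{feuniq}) actually gives, under the paper's standing unibranched hypothesis) is the full equality $(vL:vK)\cdot[Lv:Kv]\cdot\tilde p^{\nu}=p$; primality of $p$ together with $[Lv:Kv]>1$ then forces $[Lv:Kv]=p$, so that $Kv(\beta)=Lv$ and $\beta$ really does have degree $p$ over $Kv$. The inequality alone would not suffice. Second, your closing observation in the residue case is honest: when $vb\ne 0$ the family $1,b,\ldots,b^{p-1}$ differs from the literal products $y_1z_j=(cb)^{j-1}$ by $K^\times$-scalars, so it is a valuation basis with $1$ but not ``standard'' in the strictest reading of the definition. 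This is a wrinkle in the lemma's phrasing rather than in your proof, and every use of the lemma in the paper only needs that $1,b,\ldots,b^{p-1}$ is a valuation basis.
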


For the following, cf.\ \cite[Proposition~3.4]{Ku58}.
\begin{lemma}                               \label{dliffexvb}
Take a finite unibranched extension $(L|K,v)$. Then the following are equivalent:
\sn
a) is defectless,
\n
b) $(L|K,v)$ admits a valuation basis,
\n
c) $(L|K,v)$ admits a standard valuation basis,
\n
d) $(L|K,v)$ admits a standard valuation basis with 1.
\end{lemma}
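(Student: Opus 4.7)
The plan is to run the cycle $(d)\Rightarrow (c)\Rightarrow (b)\Rightarrow (a)\Rightarrow (d)$. The first two implications are immediate: a standard valuation basis with $1$ is in particular a standard valuation basis, and a standard valuation basis is by definition a valuation basis (that the products $y_iz_j$ really are valuation independent is \cite[Lemma~3.2.2]{EP}, which we cite rather than reprove).

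For $(a)\Rightarrow (d)$, I would just carry out the construction sketched in the text, taking care to insert $1$ into both factor systems. Using that $(L|K,v)$ is defectless, pick $y_1=1$ together with $y_2,\ldots,y_{\rm e}\in L$ so that $vy_1+vK,\ldots,vy_{\rm e}+vK$ enumerate the cosets of $vK$ in $vL$, and pick $z_1=1$ together with $z_2,\ldots,z_{\rm f}\in L$ so that $z_1v,\ldots,z_{\rm f}v$ form a $Kv$-basis of $Lv$. By the fundamental equality there are exactly $[L:K]$ products $y_iz_j$, and by \cite[Lemma~3.2.2]{EP} they form a valuation basis; since $y_1z_1=1$, it is one with~$1$.

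The main work is in $(b)\Rightarrow (a)$. Fix a valuation basis $b_1,\ldots,b_n$ with $n=[L:K]$. By the Lemma of Ostrowski (\ref{feuniq}) we know $n\geq {\rm e}\cdot{\rm f}$, so it suffices to prove $n\leq {\rm e}\cdot{\rm f}$. Partition the basis according to the coset of $vb_i$ in $vL/vK$; there are at most ${\rm e}$ cosets, so it is enough to show that every coset contains at most ${\rm f}$ of the $b_i$. Suppose some coset contains $m>{\rm f}$ basis elements $b_{i_1},\ldots,b_{i_m}$. For each $j$ choose $a_j\in K$ with $va_j=-vb_{i_j}$, so that $v(a_jb_{i_j})=0$; then the $m>{\rm f}=[Lv:Kv]$ residues $(a_jb_{i_j})v\in Lv$ are $Kv$-linearly dependent. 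Lifting a nontrivial dependence to coefficients $c_j\in\cO_K$, not all in $\cM_K$, one obtains
\[
v\sum_j c_ja_jb_{i_j}\>>\>0\>=\>\min_j v(c_ja_jb_{i_j})\>,
\]
contradicting valuation independence of $b_{i_1},\ldots,b_{i_m}$. Summing the bound ${\rm f}$ over the at most ${\rm e}$ cosets gives $n\leq{\rm e}\cdot{\rm f}$, whence equality, whence $d(L|K,v)=1$. This coset/residue argument is the only nontrivial step; the rest is bookkeeping and a citation to \cite{EP}.
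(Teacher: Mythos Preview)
Your cycle matches the paper's: it also runs $a)\Rightarrow d)\Rightarrow c)\Rightarrow b)\Rightarrow a)$, with the first implication via the standard construction and the middle two trivial. For $b)\Rightarrow a)$ the paper simply cites \cite[Proposition~3.4]{Ku58}, whereas you supply a self-contained pigeonhole-and-residue argument.

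That argument has one slip. When you write ``choose $a_j\in K$ with $va_j=-vb_{i_j}$'', this requires $vb_{i_j}\in vK$, i.e.\ that the coset under consideration be the trivial one; for a nontrivial coset of $vL/vK$ no such $a_j$ exists. The repair is easy: instead take $a_j\in K$ with $va_j=vb_{i_1}-vb_{i_j}\in vK$, so that all $a_jb_{i_j}$ share the common value $vb_{i_1}$. The quotients $a_jb_{i_j}/(a_1b_{i_1})$ then have value $0$, and since there are $m>\rmf$ of them their residues in $Lv$ must be $Kv$-dependent. Lifting a nontrivial dependence gives the same contradiction, because scaling valuation-independent elements by nonzero elements of $K$, or dividing them all by a common nonzero element of $L$, preserves valuation independence over $K$. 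With this adjustment your proof is complete and more explicit than the paper's citation.
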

\begin{proof}
Implication a)$\Rightarrow$d) has just been shown above. Implications d)$\Rightarrow$c)
and c)$\Rightarrow$b) are trivial. For the implication b)$\Rightarrow$a), see the proof 
of \cite[Proposition~3.4]{Ku58}.
\end{proof}
In particular, for a finite unibranched defectless extension there is always a valuation 
basis with 1.

\begin{lemma}                               \label{apprdle}
Take a finite unibranched defectless extension $(L|K,v)$ and $a\in L$.
Then the set $\{v(a-c)\mid c\in K\}$ has a maximum. More precisely, if
we choose a valuation basis $b_1,\ldots,b_n$ for $(L|K,v)$ with $b_1=1$ and write
\[
a\>=\>\sum_{i=1}^n c_i b_i\>,
\]
then $v(a-c_1)$ is the maximum of $\{v(a-c)\mid c\in K\}$.
\end{lemma}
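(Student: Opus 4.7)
The plan is to apply Lemma~\ref{dliffexvb} and exploit the valuation independence of the chosen basis directly. By that lemma, the hypothesis that $(L|K,v)$ is finite, unibranched and defectless ensures the existence of a valuation basis with $b_1=1$, so the setup in the statement is legitimate. Once such a basis is fixed, the proof is essentially a single application of the defining property of valuation independence.

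First, I would dispose of the trivial case $a\in K$, where $c_2=\ldots=c_n=0$, so $a=c_1$ and $v(a-c_1)=\infty$ is obviously the maximum. Assuming otherwise, set
\[
\gamma\>:=\>v(a-c_1)\>=\>v\Bigl(\sum_{i=2}^n c_i b_i\Bigr)\>=\>\min_{2\le i\le n} v(c_i b_i),
\]
where the last equality uses the valuation independence of $b_2,\ldots,b_n$ over $K$.

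Next, for an arbitrary $c\in K$ I would write
\[
a-c\>=\>(c_1-c)\cdot 1+\sum_{i=2}^n c_i b_i
\]
and apply valuation independence of $b_1=1,b_2,\ldots,b_n$ to obtain
\[
v(a-c)\>=\>\min\Bigl(v(c_1-c),\;\min_{2\le i\le n}v(c_ib_i)\Bigr)\>=\>\min\bigl(v(c_1-c),\,\gamma\bigr)\>\le\>\gamma.
\]
Taking $c=c_1$ yields equality, so $v(a-c_1)=\gamma$ is indeed the maximum of $\{v(a-c)\mid c\in K\}$.

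There is no genuine obstacle here; the only point to be careful about is ensuring that $1$ occurs in the valuation basis (which is what permits the expression $(c_1-c)\cdot 1$ to be absorbed into the same valuation-independent sum), and this is exactly what Lemma~\ref{dliffexvb} provides.
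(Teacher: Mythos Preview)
Your proof is correct and follows essentially the same approach as the paper's: both arguments write $a-c=(c_1-c)b_1+\sum_{i\ge 2}c_ib_i$ and apply valuation independence of the basis to compare $v(a-c)$ with $\min_{i\ge 2}v(c_ib_i)=v(a-c_1)$. The only cosmetic differences are that you separate out the trivial case $a\in K$ and assert an equality for $v(a-c)$ where the paper is content with the inequality $v(a-c_1)\ge v(a-c)$.
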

\begin{proof}
For every $c\in K$,
\begin{eqnarray*}
v(a-c_1) & = & v\sum_{i=2}^n c_i b_i
\>=\>\min_{2\leq i\leq n} vc_i b_i
\>\geq\> \min\{v(c_1-c)\,,\,vc_i b_i\mid 2\leq i\leq n\}\\
 & = & v\left(c_1-c+\sum_{i=2}^n c_i b_i\right) \>=\>v(a-c)\>.
\end{eqnarray*}
\end{proof}

\begin{corollary}                               \label{corapprdle}
Take a unibranched defectless extension  and $a_0\in L$.
Then there is some $c\in K$ such that for $a=a_0-c$, the elements $1,a,\ldots,a^{p-1}$
form a valuation basis.
\end{corollary}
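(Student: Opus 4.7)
The plan is to combine the two preceding results directly: Lemma~\ref{apprdle} lets us pick $c$ so that $v(a_0-c)$ is maximal, and then Lemma~\ref{vbdegp} will do the rest once I verify its hypothesis for $a := a_0-c$. Note first that I may assume $a_0\notin K$, since otherwise every $a^i$ lies in $K$ and no basis of $L|K$ of the desired form can exist.

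First, I would invoke Lemma~\ref{apprdle} on the defectless unibranched extension $(L|K,v)$ with the element $a_0\in L$ to obtain $c\in K$ such that $v(a_0-c)=\max\{v(a_0-c')\mid c'\in K\}$. Setting $a:=a_0-c$, translation of the argument yields
\[
va\>=\>\max\{v(a-c'')\mid c''\in K\}\>.
\]
In particular $a\ne 0$ (otherwise $a_0=c\in K$), so since $[L:K]=p$ is prime, $L=K(a)$ and $1,a,\ldots,a^{p-1}$ is a $K$-basis of $L$.

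Next I would verify the valuation-theoretic hypothesis of Lemma~\ref{vbdegp}. If $va\notin vK$, that hypothesis is satisfied and I apply the lemma. Otherwise $va\in vK$, and I pick $c_1\in K$ with $vc_1a=0$ and claim $c_1av\notin Kv$. For a contradiction, suppose $c_1av=c_2v$ for some $c_2\in K$ with $vc_2=0$. Then $v(c_1a-c_2)>0$, so dividing by $c_1$ gives
\[
v(a-c_2c_1^{-1})\>>\>-vc_1\>=\>va\>,
\]
contradicting the maximality of $va$ established above. Hence $c_1av\notin Kv$, and Lemma~\ref{vbdegp} applies to $b=a$ and produces the standard valuation basis $1,a,\ldots,a^{p-1}$ with $1$, which in particular is a valuation basis.

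There is no real obstacle: the argument is just the observation that making $v(a_0-c)$ maximal forces $a:=a_0-c$ to be either of nontrivial value class mod $vK$, or else of residue class mod $vK$ not representable by $Kv$, which is exactly the dichotomy required by Lemma~\ref{vbdegp}. The only thing to be slightly careful about is the translation step that turns a maximum over $\{v(a_0-c')\mid c'\in K\}$ into a maximum over $\{v(a-c'')\mid c''\in K\}$, and the bookkeeping that $a\ne 0$ together with $[L:K]=p$ prime forces $L=K(a)$.
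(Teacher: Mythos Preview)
Your proof is correct and follows essentially the same approach as the paper: pick $c$ maximizing $v(a_0-c)$ via Lemma~\ref{apprdle}, then feed $a=a_0-c$ into Lemma~\ref{vbdegp}. The only cosmetic difference is that where you argue the dichotomy ``$va\notin vK$ or $c_1av\notin Kv$'' directly by contradiction, the paper simply cites Lemma~\ref{immb-c} for this step.
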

\begin{proof}
By Lemma~\ref{apprdle} there is some $c\in K$ such that $v(a_0-c)=\max\{v(a_0-c)\mid 
c\in K\}$. By Lemma~\ref{immb-c} this can only happen if either $v(a_0-c)\notin vK$ or 
there is some $d\in K$ such that $vd(a_0-c)=0$ and $d(a_0-c)v\notin Kv$. We 
set $a=a_0-c$; then in both cases, the elements $1,a,\ldots,a^{p-1}$ form a valuation 
basis by Lemma~\ref{vbdegp}.
\end{proof}

For a more general setting, see Lemma~2.10 and Corollary 2.11 of \cite{Ku58}.

\mn
%
%
\section{Computation of ramification ideals}            \label{sectcompri}
%
%

%
%
%
\subsection{Basic computations}                         \label{sectann}
\mbox{ }\sn
\begin{proposition}                            \label{dlrip}
Take a finite unibranched defectless Galois extension $\cE=(L|K,v)$ with Galois group $G$.
Then every ramification ideal is principal.
\end{proposition}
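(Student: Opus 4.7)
The plan is to reduce the claim to showing that $I_H$ is finitely generated: in the valuation ring $\cO_L$ the set of ideals is totally ordered by inclusion, so every finitely generated $\cO_L$-ideal is principal (it equals the ideal generated by any generator of smallest value). Hence it suffices to show that for each $\sigma\in H\smid$, the set
\[
S_\sigma\>:=\>\left\{v\,\frac{\sigma b-b}{b}\;\Big|\;b\in L^\times,\>\sigma b\ne b\right\}
\]
has a minimum $\gamma_\sigma\in vL$, since then $I_H$ is generated by the finite family of elements $(\sigma b_\sigma-b_\sigma)/b_\sigma$ where $b_\sigma\in L^\times$ attains $\gamma_\sigma$.

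To exhibit such minima, I would invoke Lemma~\ref{dliffexvb}: since $(L|K,v)$ is finite, unibranched and defectless, it admits a valuation basis $b_1=1,b_2,\ldots,b_n$ over $K$. For $\sigma\in H\smid$, since the $b_i$ span $L|K$ and $\sigma\ne\rmid$, there is some index with $\sigma b_i\ne b_i$, so
\[
\gamma_\sigma\>:=\>\min_{1\le i\le n,\,\sigma b_i\ne b_i}v\,\frac{\sigma b_i-b_i}{b_i}
\]
is a well-defined element of $vL$. I would then verify $\gamma_\sigma\le v\frac{\sigma b-b}{b}$ for every $b\in L^\times$ with $\sigma b\ne b$. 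Writing $b=\sum_ic_ib_i$ with $c_i\in K$, valuation independence yields $vb=\min_i(vc_i+vb_i)$. Since $\sigma$ is $K$-linear, $\sigma b-b=\sum_ic_i(\sigma b_i-b_i)$, and combining the ultrametric inequality with $v(\sigma b_i-b_i)\ge vb_i+\gamma_\sigma$ (trivial when $\sigma b_i=b_i$, and immediate from the definition of $\gamma_\sigma$ otherwise) gives
\[
v(\sigma b-b)\>\ge\>\min_i\bigl(vc_i+v(\sigma b_i-b_i)\bigr)\>\ge\>\min_i(vc_i+vb_i)+\gamma_\sigma\>=\>vb+\gamma_\sigma,
\]
as desired.

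Putting the two steps together, $I_H$ is generated by the finite set $\{(\sigma b_\sigma-b_\sigma)/b_\sigma\mid\sigma\in H\smid\}$, hence principal by the opening observation. The only mildly delicate point is the ultrametric step: the elements $\sigma b_i-b_i$ are generally not themselves valuation independent over $K$, so at that step we have only an inequality rather than an equality; but since the target is already an inequality on $v(\sigma b-b)$, nothing is lost. No further hypothesis on $\cE$ (such as being purely wild, abelian, or of prime degree) is needed.
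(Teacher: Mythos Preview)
Your proof is correct and follows essentially the same approach as the paper: both use a valuation basis (Lemma~\ref{dliffexvb}) together with the ultrametric inequality to show that $v\bigl(\frac{\sigma b}{b}-1\bigr)$ attains its minimum on the basis elements, so that the ramification ideal is principal. The only cosmetic difference is that you handle an arbitrary subgroup $H$ directly by minimizing over $\sigma\in H\smid$, whereas the paper first treats $H=G$ and then reduces the general case to this one by passing to the fixed field $K'$ of $H$ and observing that $(L|K',v)$ is again defectless.
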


Take a nontrivial subgroup $H$ of $G$. We will prove the proposition by giving an 
algorithm for the computation of an element $b\smin$ such that for some $\sigma\in H$,
\begin{equation}                    \label{bmin}
v\left(\frac{\sigma b\smin}{b\smin}-1\right)\>=\>\min\left\{\left. v\left(
\frac{\sigma b}b -1\right)\,\right|\, b\in L^\times\,,\, \sigma\in H\right\}\>,
\end{equation}
which means that $\frac{\sigma b\smin}{b\smin}-1$ generates the ramification ideal
(\ref{genI}).

\begin{remark}
\rm This proposition was proven in 1970 by P.~Ribenboim in \cite{R2}. Ribenboim assumes
that $(L,v)$ has rank 1, that is, $vL$ is archimedean ordered. Our computations presented
below are inspired by his. As they will show, the assumption of rank 1 is not necessary.

A different version of the computation was presented by M.~Marshall in \cite{M}. He does
not assume that $(L,v)$ has rank 1, but that $(K,v)$ is maximally complete and that the
extension $Lv|Kv$ is separable.
The assumption that $(K,v)$ is maximally complete means that it has no nontrivial
immediate extensions, and this implies that $(K,v)$ is defectless and henselian.
\pars
In \cite{R1} Ribenboim attempts to prove Proposition~\ref{dlrip} for all non-discrete
valuations and all finite unibranched Galois extensions, but this is false. (We will
present counterexamples below.) Ribenboim's mistake was noticed by J.~L.~Chabert. In
\cite{R2} Ribenboim then gives a correct proof of Proposition~\ref{dlrip}
for all finite defectless unibranched Galois extensions in the case of rank one
valuations.
\end{remark}

We shall now present computations that will not only prove the above proposition, but
will also be used later for more advanced results. Let us start with some useful
basic principles.
\begin{lemma}                                    \label{lemv(sai/ai-1)}
Let $K$ be any field and take  and $\sigma\in \Gal K$.
\sn
1) For all $a,b\in \tilde{K}$ and $c\in K$,
\begin{equation}                         \label{sab/ab-1}
\frac{\sigma cab}{cab}-1 \>=\> \frac{\sigma ab}{ab}-1 \>=\>
\left(\frac{\sigma a}{a}-1\right)
\left(\frac{\sigma b}{b}-1\right)\>+\>\left(\frac{\sigma a}{a}-1\right)
\>+\>\left(\frac{\sigma b}{b}-1\right)\>.
\end{equation}
\sn
2) Assume that $v$ is a valuation on $\tilde K$ and that $a\in \tilde K$ is such that $v\left(\frac{\sigma a}{a}-1\right)>0$. Take
$i\in\N$ and assume that $i<\chara K$ if $\chara K>0$. Then
\begin{equation}                                 \label{v(sai/ai-1)}
v\left(\frac{\sigma a^i}{a^i}-1\right) \>=\> v\left(\frac{\sigma a}{a}-1\right) \>.
\end{equation}
\end{lemma}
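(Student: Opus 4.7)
The plan for Part~1 is pure algebra. Since $c\in K$ is fixed by $\sigma$, we have $\sigma(cab)=c\cdot\sigma(ab)$, and dividing by $cab$ yields the first equality. For the second, I would set $x:=\sigma a/a-1$ and $y:=\sigma b/b-1$, noting that $\sigma(ab)/(ab)=(\sigma a/a)(\sigma b/b)=(1+x)(1+y)$, so the identity $(1+x)(1+y)-1=xy+x+y$ is exactly what is claimed.

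For Part~2, the cleanest route is to set $u:=\sigma a/a$, so that $\sigma(a^i)/a^i=u^i$, and use the factorisation $u^i-1=(u-1)(1+u+u^2+\cdots+u^{i-1})$. The hypothesis $v(u-1)>0$ says $u$ is a one-unit: in particular $vu=0$ and $uv=1$ in $Kv$. Each power $u^j$ therefore has residue $1$, and the geometric sum $\sum_{j=0}^{i-1}u^j$ has residue $i\cdot 1=i$ in $Kv$. The restriction on $i$ is designed precisely to ensure that $i$ is a nonzero element of $Kv$: if $\chara K=p>0$, then $\chara Kv=p$ as well, and $i<p$ forces $i\not\equiv 0\pmod p$. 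Hence the geometric sum is a unit in $\cO$, i.e.\ has value zero, and taking valuations in the factorisation yields $v(u^i-1)=v(u-1)$, as required.

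The main obstacle — essentially the only nontrivial point — is verifying that $i$ has nonzero residue in $Kv$; once this is in hand, the factorisation does all the work. An alternative proof by induction on $i$ using Part~1 is possible but less economical: the inductive step expresses $\sigma a^{i+1}/a^{i+1}-1$ as a product of two high-value terms plus two single terms each of value $v(u-1)$, and one must then argue separately that the latter two do not cancel, which reduces to exactly the same observation about the residue of $i+1$ in $Kv$. For this reason I would prefer the direct factorisation approach.
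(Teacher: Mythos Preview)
Your Part~1 is exactly what the paper leaves to the reader. For Part~2 you and the paper diverge: the paper argues by induction on $i$ using the identity of Part~1, whereas you factor $u^i-1=(u-1)\sum_{j=0}^{i-1}u^j$ directly and show the geometric sum is a unit. Both routes reduce to the same key observation, that $vi=0$, and you correctly anticipate the inductive alternative and explain why it comes down to the same point. Your factorisation is the more economical of the two; the paper's induction has the minor advantage of exercising Part~1, which is presumably why Part~1 was recorded in the first place.

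One small caveat: your verification that $i$ has nonzero residue in $Kv$ covers only the case $\chara K>0$. When $\chara K=0$ the stated hypothesis imposes no restriction on $i$, so in mixed characteristic with $p\mid i$ the geometric sum has residue zero and both your argument and the paper's assertion ``$vi=0$'' fail --- indeed the claimed equality itself fails in that situation. This is a defect in the lemma's stated hypotheses rather than in your method; in all of the paper's applications one has $1\le i\le p-1$ with $p=\chara Kv$, where $vi=0$ genuinely holds.
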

\begin{proof}
1): We leave the straightforward proof to the reader.
\sn
2): By our assumption on $i$, we have $vi=0$. Using this together with equation
(\ref{sab/ab-1}), one proves equation (\ref{v(sai/ai-1)}) by induction on $i$.
\end{proof}

Further, we will need the following fact.
\begin{lemma}                    \label{baibilem}
Take a normal unibranched extension $(L|L_0,v)$ and pick elements $a_1,\ldots,a_n\in
K$. Assume that the elements $b_1\,,\ldots,b_n\in L$ are valuation independent over
$K$ and set
\begin{equation}                 \label{bsum}
b\>=\>\sum_{i=1}^n a_ib_i\>.  
\end{equation}
Then for each automorphism $\sigma$ of $L$,
\begin{equation}                    \label{baibi}
v\left(\frac{\sigma b}{b} - 1\right)\>\geq\>
\min_i v\left(\frac{\sigma a_ib_i}{a_ib_i}-1\right)\>.
\end{equation}
If in addition $\sigma$ is trivial on all $b_i\,$, then $\frac{\sigma b}{b}-1$ lies
in the $\cO_L$-ideal generated by the elements $\frac{\sigma a_i}{a_i}-1$.
\end{lemma}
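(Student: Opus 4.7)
The plan is to reduce everything to a termwise expansion of $\sigma b - b$ and then exploit valuation independence to control the coefficients. First I would write, using $\sigma$-linearity over the prime field and the fact that $\sigma$ acts on each summand,
\[
\sigma b - b \>=\>\sum_{i\,:\,a_i\ne 0}\bigl(\sigma(a_ib_i) - a_ib_i\bigr)\>,
\]
and then divide by $b$ (which is nonzero whenever some $a_i$ is nonzero, because the $b_i$ are valuation independent over $L_0$) to factor out the relative multiplier:
\[
\frac{\sigma b}{b}-1 \>=\>\sum_{i\,:\,a_i\ne 0}\frac{a_ib_i}{b}\left(\frac{\sigma(a_ib_i)}{a_ib_i}-1\right)\>.
\]

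Next I would invoke the hypothesis that $b_1,\ldots,b_n$ are valuation independent over $L_0$ and that $a_i\in L_0$. This forces
\[
vb \>=\>\min_{i\,:\,a_i\ne 0} v(a_ib_i)\>,
\]
so $v(a_ib_i/b)\geq 0$, i.e.\ $a_ib_i/b\in\cO_L$, for every $i$ with $a_i\ne 0$. Applying the ultrametric inequality to the displayed sum then yields
\[
v\!\left(\frac{\sigma b}{b}-1\right)\>\geq\>\min_{i\,:\,a_i\ne 0} v\!\left(\frac{\sigma(a_ib_i)}{a_ib_i}-1\right)\>,
\]
which is the first assertion (terms with $a_i=0$ can be adjoined harmlessly to the minimum since they do not decrease it).

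For the second assertion, if $\sigma$ fixes each $b_i$, then $\sigma(a_ib_i)=\sigma(a_i)b_i$ and consequently
\[
\frac{\sigma(a_ib_i)}{a_ib_i}-1 \>=\> \frac{\sigma a_i}{a_i}-1\>,
\]
so the identity above becomes
\[
\frac{\sigma b}{b}-1 \>=\>\sum_{i\,:\,a_i\ne 0}\frac{a_ib_i}{b}\left(\frac{\sigma a_i}{a_i}-1\right)\>,
\]
with $a_ib_i/b\in\cO_L$, exhibiting $\frac{\sigma b}{b}-1$ as an element of the $\cO_L$-ideal generated by the $\frac{\sigma a_i}{a_i}-1$. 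There is essentially no obstacle here: the only subtlety is bookkeeping around indices with $a_i=0$ (where $\sigma a_i/a_i$ is not defined but the corresponding summand vanishes), and the crucial input is simply the equality $vb=\min_i v(a_ib_i)$ granted by valuation independence, which turns the obvious ultrametric estimate into exactly the asserted bound.
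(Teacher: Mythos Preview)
Your proof is correct and follows essentially the same approach as the paper: both derive the identity $\frac{\sigma b}{b}-1=\sum_i \frac{a_ib_i}{b}\left(\frac{\sigma(a_ib_i)}{a_ib_i}-1\right)$, use valuation independence to conclude $a_ib_i/b\in\cO_L$, and then apply the ultrametric inequality. You are simply a bit more explicit about the case $a_i=0$, which the paper glosses over.
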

\begin{proof}
We have
\begin{equation}
\frac{\sigma b}{b} - 1\>=\> \sum_i \left(\frac{\sigma a_ib_i}{a_i b_i}-1\right) \cdot
\frac{a_ib_i}{b}\>.
\end{equation}
Since $vb\leq va_ib_i=v\sigma a_ib_i$ for $1\leq i\leq n$, this implies (\ref{baibi}) and that
$\frac{\sigma b}{b}-1$ lies in the $\cO_L$-ideal generated by the elements
$\frac{\sigma a_ib_i}{a_ib_i}-1$. If in addition $\sigma b_i=b_i$,
then $\frac{\sigma a_ib_i}{a_ib_i}-1=\frac{\sigma a_i}{a_i}-1$.
\end{proof}

%
We note that if $(L|K,v)$ is a unibranched Galois extension, then for every $\sigma\in
\Gal L|K$ and $b\in L^\times$,
\begin{equation}             \label{inOL}   
\frac{\sigma b}{b} - 1\>\in\> \cO_L\>.
\end{equation}

\begin{lemma}
Assume that $(L|K,v)$ is a purely wild Galois extension. Then
for every $\sigma\in \Gal L|K$ and all $a,b\in L^\times$,
\begin{equation}             \label{inML}
\frac{\sigma b}{b} - 1\>\in\> \cM_L\>
\end{equation}
and
\begin{equation}                      \label{inML2}
v\left(\frac{\sigma cab}{cab}-1\right) \>\geq\> \min\left\{v\left(\frac{\sigma a}{a}-1
\right),\,v\left(\frac{\sigma b}{b}-1\right)\>\right\}\>,
\end{equation}
with equality holding if $v\left(\frac{\sigma a}{a}-1\right)\ne v\left(\frac{\sigma b}
{b}-1\right)$.
\end{lemma}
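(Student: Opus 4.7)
My plan is to derive both claims directly from the algebraic identity (\ref{sab/ab-1}) of Lemma \ref{lemv(sai/ai-1)}, combined with the definition of a purely wild extension. The first assertion (\ref{inML}) is essentially just unpacking a definition: since $\cE=(L|K,v)$ is purely wild, $\Gal L|K = G_{\cM_L}$ by definition, and the defining condition of $G_{\cM_L}$ stipulates precisely that $\frac{\sigma b-b}{b}\in\cM_L$ for every $\sigma\in G$ and every $b\in L^\times$, which rewritten as $\frac{\sigma b}{b}-1\in\cM_L$ is exactly (\ref{inML}).

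For (\ref{inML2}), I would interpret $c$ as an element of $K^\times$, so that $\sigma c = c$ cancels in $\frac{\sigma cab}{cab}$, matching the hypothesis under which (\ref{sab/ab-1}) is stated. Setting $x:=\frac{\sigma a}{a}-1$ and $y:=\frac{\sigma b}{b}-1$, the identity (\ref{sab/ab-1}) rewrites the quantity of interest as
\[
\frac{\sigma cab}{cab} - 1 \;=\; xy + x + y.
\]
Applying the already established (\ref{inML}) to $a$ and $b$ gives $vx>0$ and $vy>0$, so $v(xy)=vx+vy>\min\{vx,vy\}$. The ultrametric triangle inequality then yields $v(xy+x+y)\geq\min\{vx,vy\}$, which is exactly (\ref{inML2}). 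If moreover $vx\neq vy$, say $vx<vy$, then $v(x+y)=vx$ while $v(xy)>vx$, so the value of the full sum $xy+x+y$ equals $vx=\min\{vx,vy\}$, giving equality.

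There is no real obstacle to this argument: it is a direct computation once (\ref{sab/ab-1}) is available, resting only on the elementary fact that $v(xy)>\min\{vx,vy\}$ whenever $vx,vy>0$. The one mildly delicate point is reading the statement correctly, since $c$ is not explicitly quantified in the lemma; (\ref{sab/ab-1}) itself forces us to take $c$ in the fixed field of $\sigma$, for which $c\in K^\times$ is the natural choice.
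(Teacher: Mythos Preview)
Your proof is correct and takes essentially the same approach as the paper. The paper's own argument is extremely terse---it simply states that (\ref{inML}) holds by the definition of purely wild and that (\ref{inML2}) follows from (\ref{inML})---and your write-up is just the natural unpacking of this via the identity (\ref{sab/ab-1}), including the observation that $c$ should be read as an element of $K^\times$.
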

\begin{proof}
Equation (\ref{inML}) holds since by the definition of ``purely wild extension'',
$\Gal L|K=G_{\cM_L}\,$. Equation (\ref{inML2}) follows from equation (\ref{inML}).
\end{proof}

\pars
\begin{proposition}                      \label{compri}
Assume that $\cE=(L|K,v)$ is a finite unibranched Galois extension with Galois group $G$.
\sn
1) Assume that $\cE$ is defectless and choose a valuation basis $b_i\,$,
$1\leq i\leq\,n$. Set
\begin{equation}                            \label{gamma}
\gamma_\cE\>:=\> \min\left\{\left. v\left(\frac{\sigma b_i}{b_i}-1\right)\,\right|\,
1\leq i\leq n\,,\, \sigma\in G\right\}\>.
\end{equation}
Then
\begin{equation}                          \label{gamma=min}
\gamma_\cE\>=\> \min\left\{\left. v\left(\frac{\sigma b}{b}-1\right)\,\right|\,
b\in L^\times\,,\, \sigma\in G\right\}\>\geq\> 0\>.
\end{equation}
Hence $b\smin$ can be chosen to be $b_i$ for suitable $i$.
\mn
2) Assume in addition that $\cE$ is purely wild and
choose a standard valuation basis $y_iz_j\,$, $1\leq i\leq\,$e, $1\leq j\leq\,$f of
$(L|K,v)$ as described in Section~\ref{sectvb}. Then
\begin{equation}
\gamma_\cE\>=\> \min\left\{\left. v\left(\frac{\sigma y_i}{y_i}-1\right),\,v\left(
\frac{\sigma z_j}{z_j}-1\right) \,\right|\, 1\leq i\leq\rme,\, 1\leq j\leq\rmf,\,
\sigma\in G\right\}\>.
\end{equation}
\sn
3) Assume in addition that $\cE$ is purely wild and that $L_0$ is an intermediate
field of $\cE$ such that $\cE_1=(L|L_0,v)$ is defectless and that
$b_i\,$, $1\leq i\leq\,n$ is a valuation basis of $(L|L_0,v)$. With respect to this
valuation basis, define $\gamma$ as in (\ref{gamma}). Assume further that there is
$\gamma_0\in vL$ such that
\begin{equation}                          \label{gamma0}
v\left(\frac{\sigma a}{a} - 1\right)\>\geq\>\gamma_0  \quad \mbox{for all $a\in
L_0^\times$ and $\sigma\in G$}\>.
\end{equation}
Then
\begin{equation}                          \label{gamma=min0}
v\left(\frac{\sigma b}{b} - 1\right)\>\geq\>\min\{\gamma_0\,,\,\gamma\}  \quad
\mbox{for all $b\in L^\times$ and $\sigma\in G$}\>.
\end{equation}
If ``$\,>$'' holds in (\ref{gamma0}) and $\gamma\leq\gamma_0\,$, then
\begin{equation}                          \label{gamma=minL}
\gamma_\cE\>=\>\gamma\>.
\end{equation}
\end{proposition}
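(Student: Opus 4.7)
The strategy throughout is to expand an arbitrary $b\in L^\times$ in a suitable basis and invoke Lemma~\ref{baibilem} to reduce the computation of $v(\sigma b/b-1)$ to contributions from the basis elements. When $\cE$ is purely wild, the identity~(\ref{sab/ab-1}) then lets us control the products that arise by forcing the cross term to dominate.

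For part 1), I would write $b=\sum_{i=1}^n c_ib_i$ with $c_i\in K$. Since $\sigma\in G$ fixes every $c_i$, the factor $\sigma(c_ib_i)/(c_ib_i)-1$ collapses to $\sigma b_i/b_i-1$, and Lemma~\ref{baibilem} yields
\[
v\!\left(\frac{\sigma b}{b}-1\right)\;\geq\;\min_i v\!\left(\frac{\sigma b_i}{b_i}-1\right)\;\geq\;\gamma.
\]
The reverse inequality is trivial because each $b_i$ lies in $L^\times$ and hence contributes to the minimum in~(\ref{gamma=min}), while $\gamma\geq 0$ follows from~(\ref{inOL}).

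For part 2), applying part 1) to the valuation basis of products $y_iz_j$ reduces matters to analyzing each value $v(\sigma(y_iz_j)/(y_iz_j)-1)$. Setting $\alpha=\sigma y_i/y_i-1$ and $\beta=\sigma z_j/z_j-1$, both of which lie in $\cM_L$ by the purely wild hypothesis, identity~(\ref{sab/ab-1}) gives $\sigma(y_iz_j)/(y_iz_j)-1=\alpha\beta+\alpha+\beta$; since $v(\alpha\beta)=v\alpha+v\beta$ strictly exceeds both $v\alpha$ and $v\beta$, one obtains $v(\alpha\beta+\alpha+\beta)\geq\min\{v\alpha,v\beta\}\geq\gamma$. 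For the reverse inequality, I would observe that $y_1=z_1=1$ forces every $y_i=y_iz_1$ and every $z_j=y_1z_j$ to appear among the $y_iz_j$, so the defining set of the new $\gamma$ is contained in the set of values already considered under part 1).

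For part 3), I would again expand $b=\sum a_ib_i$ with $a_i\in L_0$ and invoke Lemma~\ref{baibilem} (restricted to indices with $a_i\ne 0$) to obtain $v(\sigma b/b-1)\geq\min_i v(\sigma(a_ib_i)/(a_ib_i)-1)$. On each term, identity~(\ref{sab/ab-1}) together with the purely wildness of $\cE$ (ensuring $\sigma a_i/a_i-1,\sigma b_i/b_i-1\in\cM_L$) produces
\[
v\!\left(\frac{\sigma(a_ib_i)}{a_ib_i}-1\right)\;\geq\;\min\!\left\{v\!\left(\frac{\sigma a_i}{a_i}-1\right),\,v\!\left(\frac{\sigma b_i}{b_i}-1\right)\right\}\;\geq\;\min\{\gamma_0,\gamma\}
\]
by~(\ref{gamma0}) and the definition of $\gamma$, which proves~(\ref{gamma=min0}). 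For~(\ref{gamma=minL}), the hypothesis $\gamma\leq\gamma_0$ collapses the lower bound to $\gamma$, while taking $b=b_{i_0}$ with $(i_0,\sigma_0)$ realizing the minimum in~(\ref{gamma}) provides the matching upper bound. The one delicate point throughout is precisely this cross-term analysis: purely wildness is exactly what forces $v(\alpha\beta)>\max\{v\alpha,v\beta\}$, and without it the product contribution could drag the minimum below $\min\{v\alpha,v\beta\}$ and spoil the reduction.
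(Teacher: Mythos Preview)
Your proposal is correct and follows essentially the same approach as the paper: expand in the valuation basis, apply Lemma~\ref{baibilem}, and use the product identity~(\ref{sab/ab-1}) (equivalently~(\ref{inML2})) to handle cross terms under the purely wild hypothesis. Your remark that $y_1=z_1=1$ forces every $y_i$ and $z_j$ to occur among the products $y_iz_j$ makes explicit the reverse inequality in part~2) that the paper leaves implicit, and your argument for~(\ref{gamma=minL}) in part~3) is in fact slightly cleaner than the paper's, as it does not actually invoke the strict inequality in~(\ref{gamma0}).
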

\begin{proof}
1): It follows from (\ref{inOL}) that $\gamma_\cE\geq 0$. We apply Lemma~\ref{baibilem},
so $\sigma a=a$ for $a\in L_0\,$. Take $b\in L$ and write it in the form
(\ref{bsum}). Then (\ref{baibi}) reads as
\[
v\left(\frac{\sigma b}{b} - 1\right)\>\geq\>\min_i v\left(\frac{\sigma b_i}{b_i}-1\right)\>.
\]
This proves (\ref{gamma=min}).
\mn
2): Take $b\in L$ and write it in the form $b=\sum_{i,j} c_{i\,j}y_iz_j$.
Part 1) together with (\ref{inML2}) shows that for all $b\in K$,
\begin{eqnarray*}
v\left(\frac{\sigma b}{b} - 1\right)&\geq&
\min\left\{\left. v\left(\frac{\sigma y_i z_j}{y_i z_j}-1\right)\,\right|\, 
1\leq i\leq\rme,\, 1\leq j\leq\rmf,\,\sigma\in G\right\}\\
&=& \min\left\{\left. v\left(\frac{\sigma y_i}{y_i}-1\right),\,
v\left(\frac{\sigma z_j}{z_j}-1\right)\,\right|\, 
1\leq i\leq\rme,\, 1\leq j\leq\rmf,\,\sigma\in G\right\},
\end{eqnarray*}
which proves our assertion.
\mn
3): Take $b\in L$ and write it in the form (\ref{bsum}). Using (\ref{baibi}) together
with (\ref{inML2}), we obtain:
\begin{eqnarray*}
v\left(\frac{\sigma b}{b} - 1\right)&\geq&
\min\left\{\left. v\left(\frac{\sigma a_i b_i}{a_i b_i}-1\right)\,\right|\, a_i\in
L_0^\times\,,\, 1\leq i\leq n,\, \sigma\in G\right\}\\
&=& \min\left\{\left. v\left(\frac{\sigma a_i}{a_i}-1\right),\,
v\left(\frac{\sigma b_i}{b_i}-1\right)\,\right|\, a_i\in L_0^\times\,,\,
1\leq i\leq n,\,\sigma\in G\right\}\\
&=& \min\{\gamma_0\,,\,\gamma\}\>,
\end{eqnarray*}
which proves (\ref{gamma=min0}).

Now assume in addition that ``$\,>$'' holds in (\ref{gamma0}) and that $\gamma
\leq\gamma_0\,$. Then 
\[
v\left(\frac{\sigma a}{a} - 1\right)\> >\>\gamma
\]
for all $\sigma\in G$ and $a\in L_0^\times\,$. Together with (\ref{gamma=min0}) and the
definition of $\gamma$, this implies (\ref{gamma=minL}).
\end{proof}

\sn
{\it Proof of Proposition~\ref{dlrip}:}\n
Take any nontrivial subgroup $H$ of $G_{\cM_L}$ and denote its fixed field in $L$
by $K_H$. Then also $\cE_H:=(L|K_H,v)$ is a finite unibranched Galois extension, by
Lemma~\ref{md} it is again defectless, and its Galois group is $H$. Applying part 1)
of Proposition~\ref{compri} with $\cE_H$ in place of $\cE$, equation (\ref{gamma=min})
yields that $I_H=(a\in L\mid va\geq\gamma_{\cE_H})$, which is principal.
This proves our proposition. \qed

\pars
Finally, we
prove a generalization of a fact that has been used in \cite[Section~7.1]{Th2}. For
information on tame and purely wild extensions, see \cite{Ku39,Ku2}.
%

\begin{proposition}
Take a henselian field $(K,v)$, a finite purely wild Galois extension $(L|K,v)$ and a
tame Galois extension $(K'|K,v)$. Then with the unique extension of $v$ to the
compositum $L'=L.K'$, also $(L'|K',v)$ is a purely wild Galois extension of degree
$[L:K]$, and
\begin{equation}                     \label{ItoI'}
I\>\mapsto\>I\cO_{L'}
\end{equation}
is a bijection between the ramification ideals of $(L|K,v)$ and those of $(L'|K',v)$.
Its inverse is the function
\begin{equation}                     \label{I'toI}
I'\>\mapsto\>I'\cap\cO_{L}
\end{equation}
from the ramification ideals of $(L'|K',v)$ and those of $(L|K,v)$.
\end{proposition}
\begin{proof}
The extensions $L|K$ and $K'|K$ are linearly disjoint and therefore, $L'|K'$ is a
Galois extension with its Galois group $G$ isomorphic to $\Gal L|K$ via
the restriction of its elements to $L$. Take a finite Galois subextension $(K'_0|K,v)$
of $(K'|K,v)$. It is again tame, and so $(L'_0|L,v)$ is a finite tame Galois extension,
where $L'_0$ is the field compositum $L.K'_0\,$. In particular, every valuation basis
$b_1,\ldots,b_n$ of $(K'_0|K,v)$ is also a valuation basis of $(L'_0|L,v)$.

Each element $b\in L'$ already lies in such a compositum $L'_0=L.K'_0$, so it can be
written as $b=\sum_{1\leq i\leq n} a_ib_i$ with $b_1\,,\ldots,b_n$ a valuation basis of
$(K'_0|K,v)$ and suitable elements $a_i\in L$.

Now take a ramification ideal $I=I_{H'}$ of $(L'|K',v)$ where $H$ is a nontrivial
subgroup of $G$. Take $\sigma\in H'$ and $0\ne b\in L'$ such that $\frac{\sigma b}{b}-1
\in I_{H'}$ and write $b$ in the form as indicated above. Since $\sigma$ is
trivial on $K'$, Lemma~\ref{baibilem} with $L'_0$ in place of $L$ and $L$ in
place of $K$ shows that $\frac{\sigma b}{b}-1$ lies in the
$\cO_{L'_0}$-ideal generated by the elements
\[
\frac{\sigma a_i}{a_i}-1\>=\>\frac{\sigma|_L\, a_i}{a_i}-1\,\in\, I_H\>,
\]
where $H$ is the subgroup $\{\sigma|_L\mid \sigma\in H'\}$ of $\Gal L|K$.
Therefore,
\begin{equation}
\frac{\sigma b}{b}-1\,\in\, I_H \cO_{L'_0}\>\subseteq\> I_H\cO_{L'}\>,
\end{equation}
which shows that the ramification ideal $I'_H$ of $(L'|K',v)$ is a subset of $I_H
\cO_{L'}\,$. To prove the reverse inclusion, take an element $\frac{\tau a}{a}-1
\in I_H\,$, where $\tau\in H$ and $0\ne a\in L$. We write $\tau=\sigma|_L$ with
$\sigma\in H'$. Then
\[
\frac{\tau a}{a}-1\>=\>\frac{\sigma a}{a}-1\,\in\, I_{H'}\>.
\]
Thus $I_H\subseteq I'_H\,$, and we obtain
\[
I_H\cO_{L'}\>=\> I'_H\>.
\]
This proves that the function (\ref{ItoI'}) sends ramification ideals of $(L|K,v)$ to
ramification ideals of $(L'|K',v)$. It also shows that $I'_H$ is the collection of all
elements in $L'$ whose value is not less than the value of some element in $I_H\,$.
This implies that $I'_H\cap\cO_L$ is the collection of all elements in $L$ whose value
is not less than the value of some element in $I_H\,$. In other words, $I'_H\cap\cO_L=
I_H\cO_L=I_H\,$. Hence, $I_H\cO_{L'}\cap\cO_L=I_H$, which proves that the function
(\ref{ItoI'}) is a bijection, with the function (\ref{I'toI}) its inverse.
\end{proof}

\begin{remark}
\rm In \cite[Section~7.1]{Th2} only the special case is considered where $(K,v)$ is
a henselian field of mixed characteristic, $L|K$ has prime degree $p$ and
$K'=K(\zeta_p)$ where $\zeta_p$ is
a $p$-th root of unity. The latter implies that $(K'|K,v)$ is a tame extension. This
case is of interest when $L|K$, though being Galois, is not a Kummer extension, since
$L'|K'$ will be a Kummer extension.
\end{remark}

With a proof adapted from the one of the previous proposition, the following can be
shown:
\begin{proposition}
Take a henselian field $(K,v)$, a finite immediate Galois extension $(L|K,v)$ and a
Galois extension $(K'|K,v)$ for which every finite subextension is defectless. Then
with the
unique extension of $v$ to the compositum $L'=L.K'$, also $(L'|K',v)$ is an immediate
Galois extension of degree $[L:K]$, and (\ref{ItoI'}) is again a bijection between
the ramification ideals of $(L|K,v)$ and those of $(L'|K',v)$.  \qed
\end{proposition}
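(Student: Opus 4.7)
The plan is to run essentially the same argument as the previous proposition, with two substantive modifications: the tame extension $(K'|K,v)$ is weakened to one with defectless finite subextensions (so Lemma~\ref{dliffexvb} still supplies valuation bases), and the conclusion must be strengthened from ``purely wild'' to ``immediate''.

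First I would verify linear disjointness of $L|K$ and $K'|K$. The intersection $L\cap K'$ is a finite subextension of $L|K$, hence immediate, and also a finite subextension of $K'|K$, hence defectless by hypothesis; by the fundamental equality~(\ref{feuniq}) such an extension is necessarily trivial. So $L\cap K'=K$, and $L'|K'$ is Galois with restriction inducing an isomorphism $\Gal L'|K'\cong\Gal L|K$ and $[L':K']=[L:K]$.

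Next I would establish that $(L'|K',v)$ is immediate. Fix a finite subextension $K'_0|K$ of $K'|K$. Since $(L|K,v)$ is an immediate Galois extension, $d(L|K,v)=[L:K]$ by the fundamental equality, and by hypothesis $d(K'_0|K,v)=1$. Multiplicativity (Lemma~\ref{md}) gives $d(L.K'_0|L,v)\cdot [L:K]=d(L.K'_0|K'_0,v)$, while the fundamental equality at the top reads $d(L.K'_0|K'_0,v)\cdot\rme\cdot\rmf=[L.K'_0:K'_0]=[L:K]$ with $\rme=(vL.K'_0:vK'_0)$ and $\rmf=[L.K'_0v:K'_0v]$. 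Combining forces $d(L.K'_0|L,v)=\rme=\rmf=1$, so $(L.K'_0|K'_0,v)$ is immediate (and $(L.K'_0|L,v)$ is defectless as a bonus). Since every $b\in L'$ lies in some $L.K'_0$, it can be approximated by an element of $K'_0\subseteq K'$, so $(L'|K',v)$ is immediate of degree $[L:K]$.

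For the bijection (\ref{ItoI'}), take a standard valuation basis $e_1=1,\ldots,e_m$ of $(K'_0|K,v)$ (available by Lemma~\ref{dliffexvb}) and verify it remains a valuation basis of $(L.K'_0|L,v)$: linear independence over $L$ is linear disjointness, and valuation independence follows by approximating each $a_i\in L$ by $a_i^\star\in K$ with $v(a_i-a_i^\star)>va_i$ (Lemma~\ref{immb-c}) and invoking valuation independence of the $e_i$ over $K$. Crucially, the $e_i$ lie in $K'_0\subseteq K'$ and so are fixed by every $\sigma\in\Gal L'|K'$. Lemma~\ref{baibilem}, applied with $L$ in place of $L_0$ and $L.K'_0$ in place of $L$, then shows that $\frac{\sigma b}{b}-1$ lies in the $\cO_{L.K'_0}$-ideal generated by the elements $\frac{\sigma a_i}{a_i}-1$ for $a_i\in L$. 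The remaining bookkeeping --- identifying $I_H\cO_{L'}$ with $I_{H'}$ for $H'$ the image of $H$ under the Galois isomorphism, and deriving bijectivity from the identity $I_H\cO_{L'}\cap\cO_L=I_H$ (which follows from the final-segment characterization of ideals of $\cO_L$ and $\cO_{L'}$ together with $vL=vK\subseteq vK'=vL'$) --- is verbatim as in the previous proposition.

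The main obstacle is the valuation-independence claim for the $e_i$ over $L$; this is where the immediateness hypothesis on $L|K$ is essential, since it permits replacing every $L$-coefficient by a $K$-coefficient of the same value modulo an error of strictly higher value, thereby reducing the question to valuation independence over $K$, which holds by the choice of the $e_i$. Once this is in hand, every remaining step is a direct transcription of the argument of the preceding proposition.
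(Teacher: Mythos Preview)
Your proposal is correct and is precisely the adaptation the paper has in mind: the paper gives no separate proof, merely indicating that the argument of the preceding proposition carries over, and your write-up executes this faithfully. The one genuinely new ingredient you supply---taking a valuation basis of $(K'_0|K,v)$ and using immediateness of $(L|K,v)$ to check it remains a valuation basis of $(L.K'_0|L,v)$, so that the $e_i$ are fixed by $\Gal L'|K'$ and Lemma~\ref{baibilem} applies---is exactly the point where the hypotheses differ from the tame case, and your approximation argument via Lemma~\ref{immb-c} handles it cleanly.
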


\mn
%
%
%
\subsection{Ramification ideals and defect}                   \label{sectridef}
\mbox{ }\sn
Take a Galois defect extension $\cE=(L|K,v)$ of prime degree $p$ with Galois group
$G$. For every $\sigma\in G\setminus\{\rmid\}$ we set
\begin{equation}                        \label{Sigsig}
\Sigma_\sigma\>:=\> \left\{ v\left( \left.\frac{\sigma b-b}{b}\right) \right| \, 
b\in L^{\times} \right\} \>.
\end{equation}

The next theorem follows from \cite[Theorems~3.4 and 3.5]{KuRz} together with 
Theorem~\ref{CutImm}.
\begin{theorem}                                        \label{SigmaE}
For every generator $a\in L$ of $\cE$ and every $\sigma \in G\smid$,
\begin{equation}                                       \label{ram_gp}
\Sigma_\sigma \>=\> -v(a-K)+v(a-\sigma a)\>,
\end{equation}
and this set is a final segment of $vK^{>0}=\{\alpha\in vK\mid \alpha>0\}$ without a 
smallest element. Moreover, $\Sigma_\sigma$ does not depend on the choice of $\sigma 
\in G\smid$, and $G$ is the unique ramification group of $\cE$.
\end{theorem}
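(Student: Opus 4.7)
Since $\cE$ is a Galois defect extension of prime degree, equation (\ref{feuniq}) forces it to be immediate, so $vL=vK$ and $Lv=Kv$. Fix a generator $a\in L\setminus K$ and $\sigma\in G\smid$. By Theorem~\ref{CutImm}, $v(a-K)$ is an initial segment of $vK$ without maximum. The plan is to prove the formula (\ref{ram_gp}) by two inclusions, deduce the final-segment and independence statements from it, and extract uniqueness of the ramification group via Proposition~\ref{ris}.

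The first step is to establish the key inequality $v(a-\sigma a)>v(a-c)$ for every $c\in K$. Because $L|K$ is unibranched, $\sigma$ is a $v$-isometry, so $v(\sigma a-c)=v(a-c)$. Writing $\sigma a-c=(a-c)+(\sigma a-a)$ and applying the ultrametric inequality forces $v(\sigma a-a)\geq v(a-c)$, and strictness follows because $v(a-K)$ has no maximum. This immediately yields the inclusion $-v(a-K)+v(a-\sigma a)\subseteq\Sigma_\sigma$: for $b=a-c$ one computes $(\sigma b-b)/b=(\sigma a-a)/(a-c)$, whose value is $v(a-\sigma a)-v(a-c)$; the same computation shows this set lies inside $vK^{>0}$.

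The reverse inclusion is the main obstacle. For $b\in L^\times$ with $\sigma b\neq b$, I would write $b=f(a)$ with $f\in K[X]$ of degree less than $p$. A Hasse--Schmidt Taylor expansion around $a$ gives $f(\sigma a)-f(a)=(\sigma a-a)S$ with $S=\sum_{i\geq 1}\partial_i f(a)(\sigma a-a)^{i-1}$; since $v(\sigma a-a)$ strictly exceeds every value in $v(a-K)$, the leading nonzero term dominates and determines $vS$. In the generic case $f'(a)\neq 0$ one gets $vS=vf'(a)$, so $v((\sigma b-b)/b)=v(a-\sigma a)+vf'(a)-vb$, and the remaining task is to show $vb-vf'(a)\in v(a-K)$. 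This I would do by approximating $a$ with $c\in K$ so that $v(a-c)$ is sufficiently large, re-expanding $f(a)$ and $f'(a)$ in Hasse--Schmidt derivatives centered at $c$, and invoking a Kaplansky-type uniqueness of the dominant term to identify $vb-vf'(a)$ with $v(a-c_0)$ for a specific $c_0\in K$. The degenerate case $f'(a)=0$ is handled by using the first nonvanishing $\partial_{i^*}f(a)$ in place of $\partial_1 f(a)$. Making this approximation-and-dominant-term analysis fully precise is the step I expect to require the most care.

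For the final-segment property, $-v(a-K)+v(a-\sigma a)$ is a final segment of $vK$ because $v(a-K)$ is an initial segment, and it has no smallest element because $v(a-K)$ has no maximum. For independence of $\sigma$, every $\sigma'\in G\smid$ equals $\sigma^i$ with $i\in\{1,\ldots,p-1\}$, and the telescoping identity $a-\sigma^i a=\sum_{j=0}^{i-1}\sigma^j(a-\sigma a)$ together with the isometry property of each $\sigma^j$ gives $v(a-\sigma^i a)\geq v(a-\sigma a)$; symmetrizing via $\sigma=(\sigma^i)^j$ for $ij\equiv 1\pmod p$ gives equality, so $\Sigma_\sigma$ is independent of $\sigma$. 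Finally, since $\Sigma_\sigma\subseteq vK^{>0}$ we have $G\subseteq G_{\cM_L}$, so $\cE$ is purely wild; as $G$ has prime order and no nontrivial proper subgroups, Proposition~\ref{ris}(7) identifies $I_G$ as the unique ramification ideal and hence $G$ as the unique nontrivial ramification group.
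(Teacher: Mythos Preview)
The paper's own proof is a bare citation to \cite[Theorems~3.4 and~3.5]{KuRz} together with Theorem~\ref{CutImm}; your self-contained treatment of the easy inclusion, the final-segment and no-minimum properties, the independence of $\sigma$ (via telescoping and the isometry of $\sigma$), and the uniqueness of the ramification group is correct and matches what lies behind that citation.

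The hard inclusion $\Sigma_\sigma\subseteq -v(a-K)+v(a-\sigma a)$, however, has a gap as written. After expanding around $a$, your assertion that ``the leading nonzero term dominates and determines $vS$'' does not follow from $v(\sigma a-a)>v(a-K)$ alone: nothing rules out $v\partial_1 f(a)$ being much larger than $v\partial_2 f(a)+v(\sigma a-a)$, so the identification $vS=vf'(a)$ is unjustified. (The ``degenerate case $f'(a)=0$'' is in fact vacuous: since $\deg f'<p=[K(a):K]$, the equality $f'(a)=0$ forces $f'=0$, hence $b\in K$.) You correctly name the Kaplansky-type lemma as the remedy, but it should be applied from the outset by expanding around $c\in K$ rather than around $a$, and you are also aiming for more precision than is needed. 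Because the target set is already a final segment, a mere lower bound on $v((\sigma b-b)/b)$ suffices: write $b=\sum_i\partial_i f(c)(a-c)^i$ with $v(a-c)$ large enough that Kaplansky's lemma gives $vb\leq v\partial_i f(c)+iv(a-c)$ for every $i$, and observe that $v\bigl((\sigma a-c)^i-(a-c)^i\bigr)=(i-1)v(a-c)+v(\sigma a-a)$ (using $vi=0$ for $1\leq i\leq p-1$). Then every summand of $(\sigma b-b)/b$ has value at least $v(\sigma a-a)-v(a-c)$, which finishes the inclusion directly without ever computing $vS$ or locating $vb-vf'(a)$ in $v(a-K)$.
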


Our theorem shows that for every Galois defect extension of prime degree, the set 
(\ref{ram_gp}) is independent of the choice of $a$ and $\sigma$, so we denote it by 
$\Sigma_\cE\,$.
\begin{corollary}                                   \label{SigmaEcor}
In the situation of Theorem~\ref{SigmaE}, the unique ramification ideal of 
$\cE=(L|K,v)$ is the nonprincipal ideal
\begin{equation}                                  \label{IEdef}
I_{\cE}\>:=\> I_{\Sigma_\cE}\>=\> \left(\frac{\sigma_0\, a -a}{a-c}\>\left|\;\>
c\in K\right. \right) \>=\> \left(\frac{\sigma_0(a-c)}{a-c}-1\>\left|\;\>
c\in K\right. \right)\>,
\end{equation}
where $\sigma_0$ is any generator of $G$ and $a$ is any generator of $L|K$.
\end{corollary}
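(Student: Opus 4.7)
The plan is to assemble the corollary directly from Theorem~\ref{SigmaE} together with Proposition~\ref{ris}.7. First I would invoke Theorem~\ref{SigmaE}: it asserts that $G$ itself is the (unique) ramification group of $\cE$, so in particular $G=G_{\cM_L}$, which means $\cE$ is purely wild. Since $\cE$ has prime degree, part 7 of Proposition~\ref{ris} then applies and gives that $I_\cE = I_G$ is the unique ramification ideal. This disposes of the abstract side of the claim.

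Next I would translate $I_G$ into a concrete description in terms of values. By the defining formula (\ref{genI}), $I_G$ is generated by the elements $\frac{\sigma b - b}{b}$ for $\sigma \in G$ and $b \in L^\times$, so the set of values of its nonzero generators is $\bigcup_{\sigma\in G\smid}\Sigma_\sigma$. By Theorem~\ref{SigmaE} this union equals $\Sigma_\cE$ (independent of $\sigma$), and $\Sigma_\cE$ is a final segment of $vK^{>0}$ without smallest element. Hence $I_G$ is exactly the ideal $I_{\Sigma_\cE}$ associated to this final segment.

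To obtain the explicit generators in (\ref{IEdef}), I would use equation (\ref{ram_gp}) with a fixed generator $\sigma_0$ of $G$: $\Sigma_\cE = -v(a-K) + v(a-\sigma_0 a)$. Since $\sigma_0$ fixes $c\in K$, one computes
\[
\frac{\sigma_0(a-c)}{a-c}-1 \>=\> \frac{\sigma_0(a-c)-(a-c)}{a-c}\>=\> \frac{\sigma_0 a - a}{a-c},
\]
which has value $v(a-\sigma_0 a) - v(a-c)$. Thus as $c$ ranges over $K$, these elements have values ranging precisely over $\Sigma_\cE$. Because $\Sigma_\cE$ has no smallest element, every element of $I_{\Sigma_\cE}$ is divisible in $\cO_L$ by one of these generators, so they indeed generate $I_{\Sigma_\cE}$. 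Finally, nonprincipality is immediate from the fact that $\Sigma_\cE$ has no smallest element: any principal ideal is of the form $I_{\Sigma^{\geq\gamma}}$ for some $\gamma$.

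There is no real obstacle here beyond bookkeeping: all the substantive content, and in particular the delicate step that $\Sigma_\sigma$ is independent of $\sigma$ and equals $-v(a-K)+v(a-\sigma a)$ with no smallest element, is packaged into Theorem~\ref{SigmaE}. The mild point to handle carefully is the identification of the generators of $I_G$ with those listed in (\ref{IEdef}); this is resolved by the algebraic identity above together with the observation that a final segment without smallest element is determined by any cofinal family of values.
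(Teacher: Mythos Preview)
Your proposal is correct and follows essentially the same approach as the paper. The paper's proof is extremely terse—it simply says the result follows from Theorem~\ref{SigmaE} and notes that the absence of a smallest element in $\Sigma_\cE$ forces nonprincipality—whereas you spell out the intermediate steps (invoking Proposition~\ref{ris}(7), identifying $I_G$ with $I_{\Sigma_\cE}$ via the values of its generators, and verifying the explicit generating set using the identity $\frac{\sigma_0(a-c)}{a-c}-1=\frac{\sigma_0 a-a}{a-c}$); these are exactly the details the paper leaves implicit.
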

\begin{proof}
This follows from Theorem~\ref{SigmaE}. Since $\Sigma_\cE$ has no smallest element,
$I_{\Sigma_\cE}$ does not contain an element of smallest value and is
thus nonprincipal.
\end{proof}

\parb
In what follows, let $(L|K,v)$ be a finite unibranched Galois extension. Denote its
ramification field (``Verzweigungskörper'' in German) by $V$. Assuming that $V\ne L$,
we wish to investigate the ramification ideals of the Galois extension $(L|V,v)$.
Since $\Gal L|V$ is a $p$-group, $L|V$ is a tower 
\begin{equation}                          \label{tower}
V\,=\,K_0\,\subset\,\ldots\,\subset K_n\,=\,L
\end{equation}
of Galois extensions of degree $p$ such that each extension $K_i|V$ is again a 
$p$-extension, $1\leq i\leq n$. By Lemma~\ref{md}, $(L|K,v)$ is a
defect extension if and only if at least one extension of degree $p$ in the tower is a 
defect extension.

\begin{proposition}                                 \label{ramidnonprprop}
If the extension $(L|K,v)$ is such that (\ref{tower}) holds with $n\geq 1$ and $(K_n|
K_{n-1},v)$ is not defectless, then the smallest ramification ideal of $(L|K,v)$ is
nonprincipal.
\end{proposition}
\begin{proof}
Set $H=\Gal K_n|K_{n-1}\subseteq\Gal L|K$. Since $\subsetneq K_n\,$, $H$ is a higher
ramification group of $(L|K,v)$ and by part 4) of Proposition~\ref{ris}, $I_H$ is a
ramification ideal of $(L|K,v)$. It is the smallest since $H$ has no nontrivial
subgroup. As it is at the same time the unique ramification ideal of the extension
$(K_n|K_{n-1},v)$ by part 7) of Proposition~\ref{ris}, we know from
Corollary~\ref{SigmaEcor} that it is nonprincipal.
\end{proof}

\begin{corollary}                                 \label{ramidnonpr}
Take a finite unibranched Galois extension $(L|K,v)$ and assume that (\ref{tower})
holds with every extension $K_i|K_0$ being Galois. Then $(L|K,v)$ is defectless if and
only if for every subextension $(K_i|K,v)$ every ramification ideal is principal.
\end{corollary}
\begin{proof}
First assume that $(L|K,v)$ is defectless. Then by Lemma~\ref{md},
also every Galois subextension is defectless, and it is again unibranched. Hence by
Proposition~\ref{dlrip}, each of its ramification ideals is principal.

Now assume that $(L|K,v)$ is not defectless. Then at least one of the extensions
$(K_i|K_{i-1},v)$ in the tower (\ref{tower}) and hence also $(K_i|K,v)$ is not
defectless. With $K_i$ in place of $L$, Proposition~\ref{ramidnonprprop} then shows
that the smallest ramification ideal of $(K_i|K,v)$ is nonprincipal.
\end{proof}

\mn
%
%
%
\subsection{Unibranched Galois extensions of prime degree} \label{sectGaldefdegp}
\mbox{ }\sn
A Galois extension of degree $p$ of a field $K$ of characteristic $p>0$ is an
\bfind{Artin-Schreier extension}, that is, generated by an \bfind{Artin-Schreier
generator} $\vartheta$ which is the root of an \bfind{Artin-Schreier
polynomial} $X^p-X-c$ with $c\in K$. A Galois extension of degree $p$ of a field 
$K$ of characteristic 0 which contains all $p$-th roots of unity is a \bfind{Kummer
extension}, that is, generated by a \bfind{Kummer generator} $\eta$ which satisfies 
$\eta^p\in K$. For these facts, see \cite[Chapter VIII, \S8]{L}.

If $(L|K,v)$ is a Galois defect extension of degree $p$ of fields of characteristic 
0, then a Kummer generator of $L|K$ can be chosen to be a $1$-unit. Indeed, choose 
any Kummer generator $\eta$. Since $(L|K,v)$ is immediate, we have that $v\eta\in
vK(\eta)=vK$, so there is $c\in K$ such that $vc=-v\eta$. Then $v\eta c=0$, and since 
$\eta cv\in K(\eta)v=Kv$, there is
$d\in K$ such that $dv=(\eta cv)^{-1}$. Then $v(\eta cd)=0$ and $(\eta cd)v=1$. Hence 
$\eta cd$ is a $1$-unit. Furthermore, $K(\eta cd)=K(\eta)$ and $(\eta cd)^p=
\eta^pc^pd^p\in K$. Thus we can replace $\eta$ by $\eta cd$ and assume from the start
that $\eta$ is a $1$-unit. It follows that also $\eta^p\in K$ is a $1$-unit.

Throughout this article, whenever we speak of ``Artin-Schreier extension'' we refer to
fields of positive characteristic, and with ``Kummer extension'' we refer to fields of
characteristic 0.

\sn
%
%
%
\subsubsection{The defectless case}                         \label{sectdefl}
\mbox{ }\sn
%
The following proposition is taken from \cite{pr1}. For the convenience of the reader, 
and as an illustration of the usefulness of Lemma~\ref{apprdle}, we include its proof 
here.

\begin{proposition}                               \label{ASKgen}
1) Take a valued field $(K,v)$ of equal positive characteristic $p$ and a unibranched
defectless Artin-Schreier extension $(L|K,v)$. 

If $\rmf(L|K,v)=p$, then the extension has an Artin-Schreier generator $\vartheta$ of 
value $v\vartheta\leq 0$ such that $Lv=Kv(\tilde{c}\vartheta v)$ for every $\tilde{c}
\in K$ with $v\tilde{c}\vartheta=0$; the extension $Lv|Kv$ is separable if and only if
$v\vartheta=0$. 

If $\rme(L|K,v)=p$, then the extension has an Artin-Schreier generator $\vartheta$ such 
that $vL=vK+\Z v\vartheta$. Every such $\vartheta$ satisfies $v\vartheta<0$.

\sn
2) Take a valued field $(K,v)$ of mixed characteristic and a unibranched defectless 
Kummer extension $(L|K,v)$ of degree $p=\chara Kv$. Then the extension has a Kummer 
generator $\eta$ such that: 
\sn
a) if $\,\rmf(L|K,v)=p$, then either $\eta v$ generates the residue field extension, 
in which case it is inseparable, or $\eta$ is a $1$-unit and for some $\tilde{c}\in K$, 
$\tilde{c}(\eta-1)v$ generates the residue field extension;
\sn
b) if $\,\rme(L|K,v)=p$, then either $v\eta$ generates the value group extension, or 
$\eta$ is a $1$-unit and $v(\eta-1)$ generates the value group extension.
\end{proposition}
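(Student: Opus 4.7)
The plan is to apply Corollary~\ref{corapprdle}, which produces in any unibranched defectless extension of prime degree an element $a$ whose powers $1,a,\ldots,a^{p-1}$ form a valuation basis, together with the dichotomy extracted in its proof from Lemma~\ref{immb-c}: for such $a=a_0-c$ one has either $va\notin vK$, or there is some $d\in K$ with $vda=0$ and $dav\notin Kv$. In Part 1 the translation $\vartheta\mapsto\vartheta-c'$ preserves the Artin-Schreier property, so Corollary~\ref{corapprdle} directly yields a new Artin-Schreier generator with one of the two dichotomy properties. In Part 2 the analogous symmetries are multiplicative ($\eta\mapsto c\eta^i$ with $c\in K^\times$ and $i\in\F_p^\times$), so the corollary will have to be combined with a multiplicative normalization.

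For Part 1, take any Artin-Schreier generator $\vartheta_0$ and produce $\vartheta=\vartheta_0-c'$ as above. If $v\vartheta\notin vK$ then $[vL:vK]=p$, so $\rme=p$, $\rmf=1$, and $vL=vK+\Z v\vartheta$; the Artin-Schreier relation $\vartheta^p-\vartheta=c\in K$ forces $v\vartheta<0$, because $v\vartheta\geq 0$ would yield $v\vartheta=vc\in vK$. The same reasoning applied to any Artin-Schreier generator with $v\vartheta\notin vK$ gives the ``every such $\vartheta$'' clause. If instead $v\vartheta\in vK$ and some $d$ realizes the second alternative, then $\rmf=p$, $\rme=1$, and the scaling identity $(\tilde c\vartheta)v=(\tilde c/d)v\cdot(d\vartheta)v$ for $v\tilde c\vartheta=0$ shows $Kv(\tilde c\vartheta v)=Kv(d\vartheta v)=Lv$. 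To prove $v\vartheta\leq 0$ I would argue by contradiction: if $v\vartheta>0$ then $vc=v\vartheta$, and dividing $\vartheta^p-\vartheta=c$ by $\vartheta$ gives $\vartheta^{p-1}=c/\vartheta+1$ with $v(\vartheta^{p-1})>0$, hence $(c/\vartheta)v=-1\in Kv$; the identity $d\vartheta\cdot(c/\vartheta)=dc$ with $dc\in K$ then forces $d\vartheta v=-(dc)v\in Kv$ for every admissible $d$, contradicting the second alternative. The separability claim follows from reducing $\vartheta^p-\vartheta=c$ modulo $\cM_L$ when $v\vartheta=0$ (giving a separable Artin-Schreier polynomial over $Kv$) versus reducing $(\tilde c\vartheta)^p=\tilde c^p(c+\vartheta)$ when $v\vartheta<0$ (purely inseparable).

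For Part 2, the key trick is: when $\eta$ is a 1-unit Kummer generator and $c^{**}\in K$ attains the maximum of $v(\eta-K)$ (which exists by Lemma~\ref{apprdle}), the relation $v(\eta-c^{**})>0=v\eta$ forces $c^{**}$ to be a 1-unit in $K$, so $\eta':=\eta/c^{**}$ is a 1-unit Kummer generator with $\eta'-1=(\eta-c^{**})/c^{**}$ and $v(\eta'-1)=\max v(\eta-K)$. In case (a), $\rmf=p$ and $vL=vK$; rescale any Kummer generator $\eta$ so that $v\eta=0$. If $\eta v\notin Kv$ then it generates $Lv|Kv$ and satisfies the purely inseparable polynomial $X^p-(\eta^p)v$, giving the first alternative. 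Otherwise rescale further so that $\eta$ is a 1-unit and apply the key trick: since $vL=vK$ rules out the first alternative of Lemma~\ref{immb-c} applied to $\eta-c^{**}$, we obtain $d\in K$ with $vd(\eta-c^{**})=0$ and $d(\eta-c^{**})v\notin Kv$, so $\tilde c:=dc^{**}$ satisfies $v\tilde c(\eta'-1)=0$ and $\tilde c(\eta'-1)v\notin Kv$. In case (b), $\rme=p$ and $Lv=Kv$; if some Kummer generator has $v\eta\notin vK$ we are in the first alternative; otherwise rescale to a 1-unit, apply the key trick, and now $Lv=Kv$ rules out the residue alternative, forcing $v(\eta'-1)\notin vK$, which is the second alternative.

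The main obstacle is ruling out $v\vartheta>0$ in case (ii) of Part 1, which requires extracting the residue identity $(c/\vartheta)v=-1$ from the Artin-Schreier relation divided by $\vartheta$ and then using the scaling principle to propagate it to every $d\in K$ with $vd=-v\vartheta$. The parallel subtlety in Part 2 is ensuring that the multiplicative move $\eta\mapsto\eta/c^{**}$ both preserves the Kummer structure and transports the maximal approximation of $\eta$ in $K$ into the element $\eta'-1$; once this is established, the two alternatives of Lemma~\ref{immb-c} split cleanly according to whether $vL=vK$ or $Lv=Kv$ is violated, completing the argument in each of the four subcases.
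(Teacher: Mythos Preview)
Your proposal is correct and follows essentially the same route as the paper: both apply Lemma~\ref{apprdle} (via Corollary~\ref{corapprdle}) to an initial generator, translate in the Artin--Schreier case and rescale multiplicatively in the Kummer case, and then split according to the dichotomy of Lemma~\ref{immb-c}. The one noteworthy difference is in the $\rmf=p$ Artin--Schreier subcase: to rule out $v\vartheta>0$ you divide the Artin--Schreier relation by $\vartheta$ and extract the residue identity $(c/\vartheta)v=-1$, whereas the paper instead argues that $v\vartheta>0$ forces $bv=0$, so that $X^p-X-bv$ splits in $Kv$, and then invokes Hensel over the henselization to contradict unibranchedness. Your argument is more elementary and self-contained; the paper's has the advantage of simultaneously establishing that $v\vartheta=0$ forces $vb=0$ (and hence that $\vartheta v$ actually satisfies a nontrivial separable Artin--Schreier equation over $Kv$), which you obtain separately.
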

\begin{proof}
1): Take any Artin-Schreier generator $y$ of $(L|K,v)$. Then by 
Lemma~\ref{apprdle} there is $c\in K$ such that either $v(y-c)\notin vK$, or
for every $\tilde{c}\in K$ such that $v\tilde{c}(x-c)=0$ we have $\tilde{c}(y-c)v\notin 
Kv$. Since $p$ is prime, 
in the first case it follows that $\rme(L|K,v)=p$ and that $v(y-c)$ generates the value
group extension. In the second case it follows that $\rmf(L|K,v)=p$ and that $\tilde{c}
(y-c)v$ generates the residue field extension. In both cases, $\vartheta=y-c$ is an 
Artin-Schreier generator. Let $\vartheta^p-\vartheta=b\in K$.

Assume that $\rmf(L|K,v)=p$. If $v\vartheta<0$, then $v(\vartheta^p-b)=v\vartheta>
pv\vartheta=v\vartheta^p$, whence $v((\tilde{c}\vartheta)^p-\tilde{c}^pb)=
v\tilde{c}^p\vartheta>v(\tilde{c}\vartheta)^p$ for $\tilde{c}\in K$ with $v\tilde{c}\vartheta=0$ and therefore, $(\tilde{c}\vartheta)^p v
=\tilde{c}^pb v\in Kv$. In this case, the residue field 
extension is inseparable. Now assume that $v\vartheta\geq 0$ and hence also $vb\geq 0$.
The reduction of $X^p-X-b$ to $Kv[X]$ is a separable polynomial, so $Lv|Kv$ is 
separable. The polynomial $X^p-X-bv$ cannot have a zero in $Kv$, since otherwise 
the $p$ distinct roots of this polynomial give rise to $p$ distinct extensions of $v$
from $K$ to $L$, contradicting our assumption that $(L|K,v)$ is unibranched. 
Consequently, $bv\ne 0$, whence $vb=0$ and $v\vartheta= 0$.

Assume that $\rme(L|K,v)=p$. If $v\vartheta\geq 0$, then $vb\geq 0$ and $\vartheta v$
is a root of $X^p-X-bv$. If this polynomial does not have a zero in $Kv$, then
$\vartheta v$ generates a nontrivial residue field extension, contradicting our
assumption that $\rme(L|K,v)=p$. If the polynomial has
a zero in $Kv$, then similarly as before one deduces that
$(L|K,v)$ is not unibranched, contradiction. Hence $v\vartheta<0$.

\sn
2): Take any Kummer generator $y$ of $(L|K,v)$. If there is a Kummer generator $\eta$
such that $v\eta\notin vK$, then it follows as before that $\rme(L|K,v)=p$ and that 
$v\eta$ generates the value group extension. Now assume that there is no such $\eta$.

If there is a Kummer generator $y$ and some $\tilde{c}\in K$ such that $v\tilde{c}y=0$ 
and $\tilde{c}yv\notin Kv$, then it follows as before that $\rmf(L|K,v)=p$ and that 
$\tilde{c}yv$ generates the residue field extension. We set $\eta=\tilde{c}y$ and 
observe that also $\eta$ is a Kummer generator.
Since $(\eta v)^p\in Kv$, $Lv|Kv$ is purely inseparable in this case.

Now assume that the above cases do not appear, and choose an arbitrary Kummer generator 
$y$ of $(L|K,v)$. Consequently, we have that $vy\in vK$ and $\tilde{c}yv\in Kv$ for all 
$\tilde{c}\in K$ with $v\tilde{c}y=0$. Then as described at the start of this section, there 
are $c_1,c_2\in K$ such that $c_2c_1y$ is a Kummer generator of $(L|K,v)$ which is a 
$1$-unit. We replace $y$ by $c_2c_1y$.

By Lemma~\ref{apprdle} there is $c\in K$ such that $v(y-c)$ is maximal in 
$v(y-K)$ and either $v(y-c)\notin vK$ or there is some $\tilde{c}\in K$ 
such that $v\tilde{c}(y-c)=0$ and $\tilde{c}(y-c)v\notin Kv$. Since $y$ is a $1$-unit,
we know that $v(y-1)>0$, hence also $v(y-c)>0=vy$, showing that also $c$ is a $1$-unit.
Then $\eta:=c^{-1}y$ is again a Kummer generator of $(L|K,v)$ which is a $1$-unit. Since
$vc=0$, we know that $v(\eta-1)=vc(\eta-1)=v(y-c)$. Hence if $v(y-c)\notin vK$, then 
$v(\eta-1)$ generates the value group extension.

Now assume that there is $\tilde{c}\in K$ such that $v\tilde{c}(y-c)=0$ and 
$\tilde{c}(y-c)v\notin Kv$. Since $c$ is a $1$-unit, it follows that $v\tilde{c}(\eta-1)
=v\tilde{c}c(\eta-1)=v\tilde{c}(y-c)=0$ and $\tilde{c}(\eta-1)v=\tilde{c}c
(\eta-1)v=\tilde{c}(y-c)v$. We find that $\tilde{c}(\eta-1)v$ generates the residue field
extension.
\end{proof}

From this proposition we deduce:
\begin{theorem}                              \label{ThmASKgen}
Take a unibranched defectless Galois extension $(L|K,v)$ of prime degree $p$. 
\sn
1) If $\cE=(L|K,v)$ is an Artin-Schreier extension, then it admits an Artin-Schreier
generator $\vartheta$ of value $v\vartheta\leq 0$ such that $1,\vartheta,\ldots,
\vartheta^{p-1}$ form a valuation basis for $(L|K,v)$. The element $b\smin$ as in (\ref{bmin})
can be chosen to be $\vartheta$, so that 
\begin{equation}
I_{\cE}\>=\> \left(\frac{1}{\vartheta} \right)\>.
\end{equation}
We have $I_{\cE}=\cO_L$ if and only if $v\vartheta=0$, and this holds if and only if 
$Lv|Kv$ is separable of degree $p$.
\sn
2) Let $\cE=(L|K,v)$ be a Kummer extension. Then there are two cases:
\sn
a) $(L|K,v)$ admits a Kummer generator $\eta$ such that $v\eta\geq 0$ and $1,\eta,
\ldots,\eta^{p-1}$ form a valuation basis for $(L|K,v)$. In this case, $b\smin$ can be 
chosen to be $\eta$ and we have $\gamma_\cE=v(\zeta_p-1)$ and
\begin{equation}                       
I_{\cE}\>=\> (\zeta_p-1)\>.
\end{equation}
\sn
b) $(L|K,v)$ admits a Kummer generator $\eta$ such that $\eta$ is a $1$-unit 
with $v(\eta-1)\leq v(\zeta_p-1)$ and 
$1,\eta-1,\ldots,(\eta-1)^{p-1}$ is a valuation basis for $(L|K,v)$. In this case, 
$b\smin$ can be chosen to be $\eta-1$ and we have $\gamma_\cE=v(\zeta_p-1)-v(\eta-1)$
and
\begin{equation}                   \label{Icase2b}
I_{\cE}\>=\> \left(\frac{\zeta_p-1}{\eta-1}\right)\>.
\end{equation}
We have $I_{\cE}=\cO_L$ if and only if $v(\eta-1)=v(\zeta_p-1)$, and this holds if 
and only if $Lv|Kv$ is separable of degree $p$.
\end{theorem}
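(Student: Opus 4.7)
The plan is, in every case, to use Proposition~\ref{ASKgen} to choose a generator that exhibits either the value group or the residue field extension, to apply Lemma~\ref{vbdegp} to obtain a standard valuation basis consisting of consecutive powers of this generator, and then to invoke part~1 of Proposition~\ref{compri} to compute
\[
\gamma_\cE\;=\;\min\left\{\left.v\left(\frac{\sigma b_i}{b_i}-1\right)\,\right|\,1\le i\le p-1,\ \sigma\in G\smid\right\}.
\]
Once $\gamma_\cE$ is determined, $I_\cE$ consists of all $a\in L$ with $va\ge\gamma_\cE$, and we read off an explicit generator; the stated element $b\smin$ is then the basis element at which the minimum is attained.

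For part~1, Proposition~\ref{ASKgen}(1) furnishes an Artin-Schreier generator $\vartheta$ with $v\vartheta\le 0$ satisfying the hypotheses of Lemma~\ref{vbdegp}, whence $1,\vartheta,\ldots,\vartheta^{p-1}$ is a valuation basis. Fix a generator $\sigma$ of $G$ with $\sigma\vartheta=\vartheta+1$, so that $\sigma^k\vartheta=\vartheta+k$ with $vk=0$ for $1\le k\le p-1$. The binomial expansion
\[
\frac{\sigma^k\vartheta^i}{\vartheta^i}-1\;=\;\frac{(\vartheta+k)^i-\vartheta^i}{\vartheta^i}
\]
has dominant numerator term $ik\vartheta^{i-1}$ when $v\vartheta<0$, yielding the uniform value $-v\vartheta$; when $v\vartheta=0$, equation~(\ref{inOL}) gives value $\ge 0$, with $0$ attained at $i=k=1$. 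Thus $\gamma_\cE=-v\vartheta$ and $I_\cE=(1/\vartheta)$, and the separability equivalence is immediate from Proposition~\ref{ASKgen}(1).

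For part~2, Proposition~\ref{ASKgen}(2) supplies a Kummer generator of one of the two types. In case~(a) we may assume $v\eta\ge 0$ (replacing $\eta$ by $\eta^{-1}$ if necessary), and Lemma~\ref{vbdegp} gives the basis $1,\eta,\ldots,\eta^{p-1}$. From $\sigma\eta=\zeta_p\eta$ we compute
\[
\frac{\sigma^k\eta^i}{\eta^i}-1\;=\;\zeta_p^{ki}-1,
\]
which has value $v(\zeta_p-1)$ since $(\zeta_p^j-1)/(\zeta_p-1)=1+\zeta_p+\cdots+\zeta_p^{j-1}$ is a unit for $1\le j\le p-1$; hence $\gamma_\cE=v(\zeta_p-1)$ and $I_\cE=(\zeta_p-1)$. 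In case~(b), if $v(\eta-1)>v(\zeta_p-1)$, then replacing $\eta$ by $\zeta_p\eta$ and applying the ultrametric triangle inequality forces $v(\zeta_p\eta-1)=v(\zeta_p-1)$; so we may assume $v(\eta-1)\le v(\zeta_p-1)$. Lemma~\ref{vbdegp} applied to $\eta-1$ yields the basis $1,\eta-1,\ldots,(\eta-1)^{p-1}$, and the direct calculation
\[
\frac{\sigma(\eta-1)}{\eta-1}-1\;=\;\frac{(\zeta_p-1)\eta}{\eta-1}
\]
has value $v(\zeta_p-1)-v(\eta-1)\ge 0$; this value propagates to all powers $(\eta-1)^i$ via Lemma~\ref{lemv(sai/ai-1)}(2) when strictly positive, and is the minimum when zero by~(\ref{inOL}). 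Thus $\gamma_\cE=v(\zeta_p-1)-v(\eta-1)$ and $I_\cE=((\zeta_p-1)/(\eta-1))$.

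The main obstacle is the separability equivalence in case~(b). I would handle it by writing $\eta-1=c_1 y$ with $c_1\in K$ of value $v(\eta-1)$ and $vy=0$, so that $y$ satisfies the polynomial
\[
y^p+pc_1^{-1}y^{p-1}+\cdots+\binom{p}{j}c_1^{j-p}y^j+\cdots+pc_1^{1-p}\,y+\frac{1-\eta^p}{c_1^p}\;=\;0\>.
\]
Using the cyclotomic identity $vp=(p-1)v(\zeta_p-1)$, a direct computation shows that the coefficient of $y$ has value exactly $(p-1)(v(\zeta_p-1)-v(\eta-1))$ while the intermediate coefficients have strictly larger value. Hence this coefficient survives reduction modulo $\cM_L$ precisely when $v(\eta-1)=v(\zeta_p-1)$, yielding a separable polynomial with nonzero derivative and hence a separable residue field extension generated by $yv$; otherwise the reduction degenerates to $X^p-(yv)^p$, showing $Lv|Kv$ is purely inseparable.
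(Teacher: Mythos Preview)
The overall architecture of your proof matches the paper's: invoke Proposition~\ref{ASKgen}, then Lemma~\ref{vbdegp}, then part~1 of Proposition~\ref{compri}. Your computations for part~1 and part~2(a) are correct and essentially coincide with the paper's (you expand the binomial directly in part~1 rather than citing Lemma~\ref{lemv(sai/ai-1)}, which is inessential).

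There is, however, a genuine gap in part~2(b). Your replacement argument for the inequality $v(\eta-1)\le v(\zeta_p-1)$ does not work: if $v(\eta-1)>v(\zeta_p-1)$ and you pass to $\eta'=\zeta_p\eta$, then indeed $v(\eta'-1)=v(\zeta_p-1)$, but now $v(\eta'-1)\in vK$, and writing $\eta'-1=\zeta_p(\eta-1)+(\zeta_p-1)$ one sees that for any $\tilde c\in K$ with $v\tilde c(\eta'-1)=0$ the residue $(\tilde c(\eta'-1))v=(\tilde c(\zeta_p-1))v$ lies in $Kv$. So $\eta'-1$ fails both hypotheses of Lemma~\ref{vbdegp} and you cannot produce the valuation basis you need. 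The paper avoids this detour entirely: it first writes down
\[
v\left(\frac{\sigma(\eta-1)}{\eta-1}-1\right)\;=\;v(\zeta_p-1)-v(\eta-1)
\]
and then simply observes that this quantity is $\ge 0$ because $\gamma_\cE\ge 0$ (equivalently, by~(\ref{inOL})), whence $v(\eta-1)\le v(\zeta_p-1)$ automatically for the $\eta$ supplied by Proposition~\ref{ASKgen}(2). You already cite~(\ref{inOL}) two lines later, so the fix is to drop the replacement step and use this observation from the outset.

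Your separability argument in case~2(b) is a legitimate computational alternative to the paper's, which argues more structurally via the ramification field ($I_\cE=\cO_L$ iff $G_{\cM_L}$ is trivial iff the ramification field is $L$). But note that your substitution $\eta-1=c_1y$ with $c_1\in K$ presupposes $v(\eta-1)\in vK$, which holds only in the $\rmf=p$ sub-case of~2(b). You should dispose of the $\rme=p$ sub-case separately: there $v(\eta-1)\notin vK$, hence $v(\eta-1)\ne v(\zeta_p-1)$ so $I_\cE\ne\cO_L$, while $Lv=Kv$ so the residue extension is trivially not of degree~$p$; both sides of the equivalence are false and there is nothing more to check.
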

\begin{proof}
Throughout the proof we use part 1) of Proposition~\ref{compri},
\sn
1): By part 1) of Proposition~\ref{ASKgen} there exists an Artin-Schreier generator 
$\vartheta$ of value $v\vartheta\leq 0$ such that $v\vartheta$ generates the value 
group extension, or 
$v\tilde{c}\vartheta=0$ and $Lv=Kv(\tilde{c}\vartheta v)$ for some $\tilde{c}\in K$.
By Lemma~\ref{vbdegp}, it follows that $1,\vartheta,\ldots,\vartheta^{p-1}$ is a 
valuation basis for $(L|K,v)$. 

If $v\vartheta<0$, then 
\begin{equation}                      \label{IEASgen}
v\left(\frac{\sigma \vartheta}{\vartheta}-1\right)
\>=\>v\left(\frac{\sigma \vartheta-\vartheta}{\vartheta}\right)
\>=\>-v\vartheta\>=\>v\left(\frac{1}{\vartheta}\right)\> >\> 0\>
\end{equation}
for every $\sigma\in \Gal L|K\setminus \{\rmid\}$ since then $\sigma \vartheta-
\vartheta\in\F_p\setminus\{0\}$.
%
%
Hence by Lemma~\ref{lemv(sai/ai-1)}, for $1\leq j\leq p-1$ we have
\[
v\left(\frac{\sigma \vartheta^j}{\vartheta^j}-1\right)\>=\> 
v\left(\frac{\sigma \vartheta}{\vartheta}-1\right)
\>=\>v\left(\frac{1}{\vartheta}\right)\>.
\]
This proves that $b\smin$ can be chosen to be $\vartheta$ in this case.

\pars
If $v\vartheta=0$, which by part 1) of Proposition~\ref{ASKgen} holds if and only if
$Lv|Kv$ is separable of degree $p$, then 
\[
v\left(\frac{\sigma \vartheta}{\vartheta}-1\right)
\>=\>v\left(\frac{1}{\vartheta}\right)\> =\> 0\>,
\]
and as the value $\gamma_\cE$ defined in (\ref{gamma}) is non-negative, this is
equivalent to $I_{\cE}=\cO_L\,$.

\mn
2): By part 2) of Proposition~\ref{ASKgen} there exists a Kummer generator $\eta$ such
that either
\n
a) $v\eta$ generates the value group extension, or $\eta v$ generates the residue field
extension, or
\n
b) $\eta$ is a $1$-unit and $v(\eta-1)$ generates the value group extension or for some 
$\tilde{c}\in K$, $\tilde{c}(\eta-1)v$ generates the residue field extension.
\pars
We first consider case a). By Lemma~\ref{vbdegp}, it follows that $1,\eta,\ldots,
\eta^{p-1}$ is a valuation basis for $(L|K,v)$. If $v\eta$ generates the value group
extension, then so does $v\eta^{-1}$. Therefore, we can assume that $v\eta\geq 0$. For
$1\leq j\leq p-1$,
\[
v\left(\frac{\sigma \eta^j}{\eta^j}-1\right)\>=\> 
v\left(\frac{\sigma \eta^j-\eta^j}{\eta^j}\right)\>=\>
v\left(\frac{\zeta_p^k \eta^j-\eta^j}{\eta^j}\right)\>=\>
v(\zeta_p^k-1)\>=\>v(\zeta_p-1)
\]
for some $k\in\N$; the last equation holds since $v(\zeta-1)=vp/(p-1)$ for every 
primitive $p$-th root of unity $\zeta$ (cf.\ \cite[Lemma~2.5]{pr1}). This proves that
in case a), $b\smin$ can be chosen to be $\eta$ and we have $\gamma_\cE=v(\zeta_p-1)$.

\parm
Now we consider case b). Again by Lemma~\ref{vbdegp}, $1,\eta-1,\ldots,(\eta-1)^{p-1}$ 
is a valuation basis for $(L|K,v)$. Since $v\eta=0$, we have
\[
v\left(\frac{\sigma \eta-1}{\eta-1}-1\right)\>=\>
v\left(\frac{\sigma \eta-\eta}{\eta-1}\right)\>=\>
v(\zeta_p-1)-v(\eta-1)\>.
\]
This value must be non-negative since it is not less than $\gamma_\cE\,$. If it is equal
to $0$, then it must be equal to $\gamma_\cE\,$. If it is positive, 
then we can apply Lemma~\ref{lemv(sai/ai-1)}, obtaining that for $1\leq j\leq p-1$,
\[
v\left(\frac{\sigma (\eta-1)^j}{(\eta-1)^j}-1\right)\>=\>
v\left(\frac{\sigma \eta-1}{\eta-1}-1\right)\>
\]
and consequently, this value is again equal to $\gamma_\cE\,$.
Hence in case b), $b\smin$ can be chosen to be $\eta-1$ and we have $\gamma_\cE=
v(\zeta_p-1)-v(\eta-1)$. We have $I_{\cE}=\cO_L$ if and only if the ramification field 
of $(L|K,v)$ is equal to $L$, which means that $p$ does not divide $\rme(L|K,v)$ and 
$Lv|Kv$ must be separable. Since $(L|K,v)$ is assumed to be unibranched and defectless 
of degree $p$, this can only hold if and only if $Lv|Kv$ is separable of degree $p$.
\end{proof}

\begin{remark}            \label{rem}
\rm Equation~(\ref{Icase2b}) also holds in case 2 a) of the previous theorem since in 
this case, $v(\eta-1)=0$. Indeed, in that case we have $v\eta\geq 0$, and $1,\eta,
\ldots,\eta^{p-1}$ form a valuation basis for $(L|K,v)$. If $v\eta>0$, then 
$v(\eta-1)=0$. If $v\eta=0$, then $1,\eta v,\ldots,(\eta v)^{p-1}$ form a basis of 
$Lv|Kv$, so $\eta v\ne 1$, whence $v(\eta-1)=0$ again.
\end{remark}

\mn
%
%
%
\subsubsection{The defect case}                                 \label{sectdefc}
\mbox{ }\sn
The next results follow from Theorem~\ref{SigmaE} and are part of
\cite[Theorems~3.4 and~3.5]{KuRz}.
\begin{theorem}                                        \label{dist_galois_p}
Take a Galois defect extension $\cE=(L|K,v)$ of prime degree with Galois group $G$.
%
%
If $(L|K,v)$ is an Artin-Schreier defect extension with any Artin-Schreier generator 
$\vartheta$, then
\begin{equation}                            \label{SigmaAS}
\Sigma_{\cE} \>=\> -v(\vartheta-K)\>.
\end{equation}
If $K$ contains a primitive root of unity $\zeta_p$ and $(L|K,v)$ is a Kummer extension 
with Kummer generator $\eta$ of value $0$, then
\begin{equation}                            \label{SigmaKum}
\Sigma_{\cE} \>=\> v(\zeta_p-1)\,-\,v(\eta-K) \>=\> \frac{vp}{p-1}\,-\,v(\eta-K)\>.
\end{equation}
\end{theorem}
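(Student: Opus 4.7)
The plan is to apply Theorem~\ref{SigmaE} directly, choosing the Artin-Schreier or Kummer generator as the generator $a$ of the extension, and then reading off $v(a-\sigma a)$ explicitly in each case.

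In the Artin-Schreier case, I take $a=\vartheta$ so that $\vartheta^p-\vartheta=c$ for some $c\in K$. For any nontrivial $\sigma\in G$ we have $\sigma\vartheta=\vartheta+k$ for some $k\in\F_p^\times$, hence $v(\sigma\vartheta-\vartheta)=vk=0$. Substituting into formula (\ref{ram_gp}) from Theorem~\ref{SigmaE} gives $\Sigma_\cE=-v(\vartheta-K)+0=-v(\vartheta-K)$, which is (\ref{SigmaAS}).

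In the Kummer case I take $a=\eta$ with $v\eta=0$. Any nontrivial $\sigma\in G$ acts by $\sigma\eta=\zeta_p^k\eta$ for some $k\in\{1,\ldots,p-1\}$, so $\sigma\eta-\eta=(\zeta_p^k-1)\eta$ and $v(\sigma\eta-\eta)=v(\zeta_p^k-1)$. It is standard (and recalled as \cite[Lemma~2.5]{pr1} in the proof of Theorem~\ref{ThmASKgen}) that $v(\zeta_p^k-1)=v(\zeta_p-1)=vp/(p-1)$ for every primitive $p$-th root of unity; this comes from the fact that $\prod_{k=1}^{p-1}(\zeta_p^k-1)=p$ up to a unit together with the henselian hypothesis, which forces all factors to have equal value. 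Plugging into (\ref{ram_gp}) yields
\[
\Sigma_\cE\>=\>-v(\eta-K)+v(\zeta_p-1)\>=\>\frac{vp}{p-1}-v(\eta-K)\>,
\]
which is (\ref{SigmaKum}).

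Since Theorem~\ref{SigmaE} already guarantees that $\Sigma_\sigma$ is independent of the choice of generator $\sigma\in G\smid$, nothing further needs to be checked in either case. The only nontrivial ingredient beyond a direct substitution is the computation of $v(\zeta_p-1)=vp/(p-1)$ in the Kummer case, but this is a standard consequence of the minimal polynomial $X^{p-1}+\cdots+1$ of $\zeta_p$ over $K$ and the fact that $(K,v)$ carries a unique extension of the valuation to $K(\zeta_p)$; so no real obstacle arises, and the theorem is essentially an immediate corollary of Theorem~\ref{SigmaE}.
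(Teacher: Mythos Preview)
Your proposal is correct and matches the paper's approach exactly: the paper simply states that the theorem follows from Corollary~\ref{SigmaEcor} (hence Theorem~\ref{SigmaE}) and cites \cite[Theorems~3.4 and~3.5]{KuRz}, and you have spelled out precisely this derivation. One minor remark: your aside that ``the henselian hypothesis forces all factors to have equal value'' is not quite accurate---henselianity is neither assumed nor needed here, since $\zeta_p\in K$ by hypothesis; the equality $v(\zeta_p^k-1)=v(\zeta_p-1)$ follows directly from the factorization $\zeta_p^k-1=(\zeta_p-1)(1+\zeta_p+\cdots+\zeta_p^{k-1})$, where the second factor reduces to $k\in\F_p^\times$ modulo $\cM_K$ and is therefore a unit.
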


\begin{theorem}                                  \label{ThmIEdef}
Take a Galois defect extension $\cE=(L|K,v)$ of prime degree $p$. 
\sn
1) If $(L|K,v)$ is an Artin-Schreier extension with Artin-Schreier generator 
$\vartheta$, then
\begin{eqnarray*}
I_{\cE}&=& \left(\frac{1}{\vartheta-c}\>\left|\;\> c\in K\right. \right)\\ 
&=& \left(\frac{1}{b}\>\left|\;\>b\, \mbox{ an Artin-Schreier generator of } 
L|K \right.\right)\>.
\end{eqnarray*}

\sn
2) Let $(L|K,v)$ be a Kummer extension with a Kummer generator $\eta$ which is a 
$1$-unit, and $\zeta_p$ a primitive $p$-th root of unity. Then
\begin{eqnarray*}
I_{\cE}&=& \left(\frac{\zeta_p-1}{\eta-c}\>\left|\;\> c\in K \mbox{ a $1$-unit}\right. 
\right)\\
&=& \left(\frac{\zeta_p-1}{b-1}\>\left|\;\> b\, \mbox{ a Kummer generator of $L|K$ 
which is a $1$-unit}\right. \right)\>.
\end{eqnarray*}
\end{theorem}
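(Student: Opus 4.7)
The approach is to apply Corollary~\ref{SigmaEcor} to the given generator ($a=\vartheta$ in part 1), $a=\eta$ in part 2)) to obtain $I_\cE = ((\sigma_0 a - a)/(a-c)\mid c\in K)$, and then convert this into the two claimed descriptions by (i) reducing the numerator $\sigma_0 a - a$ to a standard element modulo a unit in $\cO_L$, and (ii) parametrising all Artin--Schreier (resp.\ $1$-unit Kummer) generators explicitly so that the corresponding $\cO_L$-ideals can be compared.

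For part 1), since $\vartheta$ is an Artin--Schreier generator, $\sigma_0\vartheta-\vartheta=k\in\F_p^\times$ is a unit in $\cO_L$, so Corollary~\ref{SigmaEcor} immediately yields $I_\cE=(1/(\vartheta-c)\mid c\in K)$. For the second equality I would classify the Artin--Schreier generators of $L|K$: a direct computation shows every $i\vartheta+c'$ with $i\in\F_p^\times$, $c'\in K$ is such a generator; conversely, if $b^p-b\in K$ and $b\notin K$, then $\sigma_0 b - b = j \in \F_p\setminus\{0\}$, so $b-(j/k)\vartheta$ is $\sigma_0$-fixed and hence in $K$. The substitution $1/b = i^{-1}/(\vartheta-(-c'/i))$ then shows the two ideal descriptions coincide.

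For part 2), taking $a=\eta$ and using $\sigma_0\eta=\zeta_p^j\eta$ for some $j\in\{1,\ldots,p-1\}$ gives $\sigma_0\eta-\eta=(\zeta_p^j-1)\eta$; since $\eta$ is a $1$-unit and $v(\zeta_p^j-1)=v(\zeta_p-1)$, this generates the same $\cO_L$-ideal as $\zeta_p-1$, so $I_\cE=((\zeta_p-1)/(\eta-c)\mid c\in K)$. To restrict to $1$-unit $c$, I would observe that if $c$ is not a $1$-unit then $v(\eta-c)\leq 0$, while $v(\eta-1)>0$ because $\eta\neq 1$ and $\eta v=1$; thus $v((\zeta_p-1)/(\eta-c))\geq v(\zeta_p-1)>v((\zeta_p-1)/(\eta-1))$, so the non-$1$-unit cases are already absorbed by the $1$-unit case $c=1$. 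For the second equality, any Kummer generator $b$ satisfies $\sigma_0 b = \zeta_p^i b$ (since $\sigma_0(b^p)=b^p$), so $b=c\eta^i$ with $c\in K^\times$, and if $b$ is a $1$-unit then so is $c$. Applying Corollary~\ref{SigmaEcor} with $a=b$ and parameter $1$ (and factoring out units) gives one inclusion; conversely, for any $1$-unit $c\in K$ the element $b:=\eta/c$ is a $1$-unit Kummer generator with $b-1=(\eta-c)/c$, so $(\zeta_p-1)/(b-1)$ and $(\zeta_p-1)/(\eta-c)$ generate the same $\cO_L$-ideal.

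The main obstacle I anticipate is the restriction to $1$-unit parameters in part 2): unlike the Artin--Schreier case, where $\sigma_0\vartheta-\vartheta$ is already a unit so the ideal descriptions follow almost formally from the corollary, in the Kummer case one must carefully verify that the contributions from non-$1$-unit parameters are absorbed by the $1$-unit case. This relies on the value-theoretic comparison above, together with the normalisation from the start of Section~\ref{sectGaldefdegp} ensuring that a Kummer generator of an immediate extension can be rescaled to a $1$-unit.
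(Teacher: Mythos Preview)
Your proposal is correct and follows essentially the same route as the paper: both parts rest on Corollary~\ref{SigmaEcor}, the reduction of $\sigma_0 a-a$ to $1$ (resp.\ $(\zeta_p-1)\cdot\text{unit}$), and the absorption of non-$1$-unit parameters in the Kummer case via $v(\eta-c)\le 0<v(\eta-1)$.

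The only noteworthy difference is in how you establish the \emph{second} equality in each part. You explicitly parametrise all Artin--Schreier generators as $i\vartheta+c'$ (resp.\ all $1$-unit Kummer generators as $c\eta^i$) and then compute $1/b$ (resp.\ $(\zeta_p-1)/(b-1)$) element by element. The paper instead exploits that Corollary~\ref{SigmaEcor} is stated for an \emph{arbitrary} generator $a$ of $L|K$: applied to $a=b$ with parameter $c=0$ (resp.\ $c=1$), it gives $(\sigma_0 b-b)/b\in I_\cE$ (resp.\ $(\sigma_0 b-b)/(b-1)\in I_\cE$) directly, and since $\sigma_0 b-b$ is a unit multiple of $1$ (resp.\ of $\zeta_p-1$), the containment of the second ideal in $I_\cE$ follows without any classification. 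Your argument is slightly longer but yields the explicit form of all generators as a byproduct; the paper's is shorter but less constructive.
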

\begin{proof}
1): The first equation follows from equation (\ref{IEdef}) of Corollary~\ref{SigmaEcor}, 
where we take $\sigma_0$ such that $\sigma_0 \vartheta=\vartheta+1$. The ideal on the right
hand side of the second equation contains the ideal on the right hand side of the first 
equation because $\vartheta-c$ is again an Artin-Schreier generator for every $c\in K$.
Further, by Corollary~\ref{SigmaEcor} the ideal on the right hand side of the second 
equation is contained in $I_\cE\,$. Hence the second equation follows from the first.
\sn
2): The first equation follows from equation (\ref{IEdef}) of Corollary~\ref{SigmaEcor},
where we take $\sigma_0$ such that $\sigma_0\eta=\zeta_p\eta$, because then 
$\sigma_0(\eta-c)-(\eta-c)=(\zeta_p-1)\eta$ and we can drop $\eta$ since it is a unit. 
Further, we can restrict $c$
to $1$-units since if $c$ is not a $1$-unit, then $v(\eta-c)\leq 0 <v(\eta-1)$ and 
$\frac{\zeta_p-1}{\eta-c}\in\left(\frac{\zeta_p-1}{\eta-1}\right)$.

When $c$ is a $1$-unit, then $\eta-c=c(\frac \eta c -1)$, the quotient $b=\frac \eta c$ 
is again a Kummer generator which is a $1$-unit, and we can drop the unit factor $c$. 
This shows that the ideal on the right hand side of the second equation contains the ideal 
on the right hand side of the first equation. Further, by Corollary~\ref{SigmaEcor} the 
ideal on the right hand side of the second equation is contained in $I_\cE\,$. Hence the 
second equation again follows from the first.
\end{proof}

\mn
%
%
%
\subsection{When does the equality $I_{\cE}=\cM_L$ hold?}      \label{sectIeqM}
\mbox{ }\sn
Throughout, we assume that $\cE=(L|K,v)$ is a purely wild Galois extension of degree
$p=\chara Kv$. If $\chara K =0$, we assume in addition that $K$ contains a primitive
$p$-th roots of unity $\zeta_p\,$, so that $L|K$ is a Kummer extension. Under these
assumptions, we will determine the cases where $I_{\cE}=\cM_L\,$.

\sn
%
%
%
\subsubsection{The defectless case}                         \label{sectdefleq}
\mbox{ }\sn
We assume $\cE$ to be defectless. Then we know from Proposition~\ref{dlrip}
that $I_{\cE}$ is principal. Hence
for it to be equal to $\cM_L\,$, the latter and also $\cM_K$ must be principal. As
$\cE$ is purely wild, the extension $Lv|Kv$ cannot be separable of degree $p$.
\begin{proposition}
Let the assumptions be as described above.
\sn
1) Assume that $\cE$ is an Artin-Schreier extension. If $(vL:vK)=p$, then
$I_{\cE}=\cM_L$ if and only if $\cE$ admits an Artin-Schreier generator $\vartheta$
such that $-v\vartheta$ is the smallest positive element of $vL$ and $-pv\vartheta$ is
the smallest positive element of $vK$. If $[Lv:Kv]=p$, then
$I_{\cE}=\cM_L$ if and only if $\cE$ admits an Artin-Schreier generator $\vartheta$
such that $-v\vartheta$ is the smallest positive element of $vL=vK$.
\sn
2) Assume that $\cE$ is a Kummer extension. Then $I_{\cE}=\cM_L$ holds if and only if
$\cE$ admits a Kummer generator $\eta$ such that one of the following cases holds:
\sn
(a) $\rme(L|K,v)=p$ and $v\eta$ generates the value group extension or $\rmf(L|K,v)=p$,
$v\eta=0$ and $\eta v$ generates the residue field extension, and $\cM_L=(\zeta_p-1)
\cO_L$ as well as $\cM_K=(\zeta_p-1)\cO_K\,$,
\sn
(b) $\rme(L|K,v)=p$, $\eta$ a $1$-unit, $v(\eta-1)$ generates the value group extension
or $\rmf(L|K,v)=p$ and $v\tilde{c}(\eta-1)$ generates the residue field extension for
some $\tilde{c}\in K$, and $\cM_L=\frac{\zeta_p-1}{\eta-1}\cO_L\,$.
\end{proposition}
\begin{proof}
Our statements follow from Proposition~\ref{ASKgen} together with Theorem~\ref{ThmASKgen}.
In case 2(a) note that $\cM_L=(\zeta_p-1)\cO_L$ means that $v(\zeta_p-1)$ is the smallest
positive element in $vL$, which implies that it also is the smallest positive element in
$vK$. This is clear if $\rmf(L|K,v)=p$, whence $vL=vK$. On the other hand, if
$\rme(L|K,v)=p$, then since $v(\zeta_p-1)=\frac{vp}{p-1}$ and $vp\in vK$, we again must
have that $v(\zeta_p-1)$ is the smallest positive element in $vK$.
\end{proof}

Note that if case 2(a) holds with $\rme(L|K,v)=p$, then $vK$ cannot be archimedean,
because $v(\zeta_p-1)$ is the smallest positive element of both $vL$ and $vK$.
\pars
Let us give examples for the different types of extensions appearing in the proposition.
\sn
$\bullet$ Artin-Schreier extension with $(vL:vK)=p$: take a valued field $(K,v)$ of
characteristic $p>0$ such that $vK$ has a smallest positive element $vc$, $c\in K$.
Let $\vartheta$ be a root of $X^p-X-c^{-1}$. Then $v\vartheta=-vc/p$ (cf.\
\cite[Lemma 2.12]{Ku20}). Thus $v\vartheta^{-1}$ is the smallest positive element of
$vK(\vartheta)$, whence $I_\cE=\cM_L$ for $L=K(\vartheta)$.
\sn
$\bullet$ Artin-Schreier extension with $[Lv:Kv]=p$: take $(K,v)$ and $c\in K$ as
before. Further, assume that $Kv$ contains an element $dv$, $d\in \cO_K^\times$,
which does not have a $p$-th root in $Kv$. Let $\vartheta$ be a root of $X^p-X-c^{-p}d$.
Then $v\vartheta=-vc$ and $v(c\vartheta-d)>0$, so that $c\vartheta v$ is a
$p$-th root of $dv$ (cf.\ \cite[Lemma 2.13]{Ku20}). We obtain $[Lv:Kv]=p$  for
$L=K(\vartheta)$, so $vL=vK$. As $v\vartheta^{-1}=vc$ is the smallest positive element
of $vK=vL$, it follows that $I_\cE=\cM_L\,$.
\sn
$\bullet$ Kummer extension with Kummer generator $\eta$ such that $v\eta$ generates
the value group extension, with $(vL:vK)=p$: take $K=\Q_p(\zeta_p,t)$, where $t$ is
transcendental over $\Q_p(\zeta_p)$ and extend the $p$-adic valuation to a valuation $v$
of $K$ in such a way that $vK$ is the lexicographic product $vt\,\Z\times v(\zeta_p-1)
\Z$. Let $\eta$ be a root of $X^p-t$ and set $L:=K(\eta)$. Then $(vL:vK)=p$, $v\eta$
generates the value group extension, and $vL$ is the lexicographic product $\frac{vt}{p}
\Z\times v(\zeta_p-1)\Z$. Consequently, $v(\zeta_p-1)$ is still the smallest positive
element of $vL$, showing that $I_\cE=\cM_L\,$.
\sn
$\bullet$ Kummer extension with Kummer generator $\eta$ such that $v\eta=0$ and
$\eta v$ generates the residue field extension, with $[Lv:Kv]=p$: take again $K=
\Q_p(\zeta_p,t)$, but now extend the $p$-adic valuation to a valuation $v$
of $K$ in such a way that $v$ is the Gauß vakuation of the rational function field
$K=\Q_p(\zeta_p)(t)$. Then $tv$ is transcendental over $\Q(\zeta_p)v=\F_p$ and
does not have a $p$-th root in $Kv$. Let $\eta$ be a root of $X^p-t$ and set $L:=
K(\eta)$. Then $\eta v=(tv)^{1/p}$ and $\eta v$ generates the residue field extension.
Since $[Lv:Kv]=p$ implies $vL=vK$, $v(\zeta_p-1)$ is still the smallest positive
element of $vL$, showing that $I_\cE=\cM_L\,$.

\sn
$\bullet$ To construct extensions described in case 2(b), take a valued field
$(K,v)$ of characteristic $0$ with residue characteristic $p>0$ and assume that
$\zeta_p\in K$. Take $c\in \cO_K$ and a root $a$ of the polynomial
\begin{equation}                          \label{f}
f(X)\>=\>(X+1)^p-c\>=\> X^p+\sum_{0<i<p} \binom{p}{i} X^{p-i} +1-c\>.
\end{equation}
If $a\notin K$, then $\eta:=a+1$ is a Kummer generator of $K(a)|K$, while $a=\eta-1$ is
not.

For $0<i<p$, each binomial coefficient in (\ref{f}) has value $vp>0$ and we have
$v\binom{p}{i}
a^{p-i}\geq vp+va$. It follows that $va^p=pva$ is smaller than the value of all the
terms $\binom{p}{i} a^{p-i}$, and consequently must be equal to $v(1-c)$, if and only if
$pva<vp+va$, that is, $va<\frac{vp}{p-1}$. If this is the case, then  $v(a^p-(1-c))>
va^p$ and thus $v(1-c)=pva<\frac{pvp}{p-1}$. Conversely, if $v(1-c)<\frac{pvp}{p-1}$,
then $va<\frac{vp}{p-1}$ since otherwise, $v\binom{p}{i} a^{p-i}\geq vp+va\geq
\frac{pvp}{p-1}>v(1-c)$ so that $va^p=v(1-c)<\frac{pvp}{p-1}$, whence $va<
\frac{vp}{p-1}$, a contradiction.

\pars
Let us give an example of an extension with $(vL:vK)=p$.
Take $K=\Q(\zeta_p)$ with the extension $v$ of the $p$-adic extension. Then $vK=
\frac 1 {p-1}\Z$, so $vp$ is not $p$-divisible in $vK$. Set $c=1-p$ so that $1-c=p$.
Thus $v(1-c)=vp<\frac{pvp}{p-1}$ and we obtain from our above computations that
$v(\eta-1)=va=\frac{vp}{p}$, which generates the value group extension. Further,
\begin{equation}                       \label{spe}
v\,\frac{\zeta_p-1}{\eta-1}\>=\> \frac{vp}{p-1}-\frac{vp}{p}\>=\>\frac{vp}{p(p-1)}\>,
\end{equation}
which is the smallest positive element in $vL$. This shows that $I_\cE=\cM_L\,$.

\pars
Now we give an example of an extension with $[Lv:Kv]=p$. Choose $\tilde{c}$ such that
$\tilde{c}^{-p}=p$ and extend the $p$-adic valuation to $\Q(\zeta_p,\tilde{c})$. Further,
take a transcendental element $t$ and $v$ to be the Gau{\ss} valuation on the rational
function field $K:=\Q(\zeta_p,\tilde{c})(t)$. Then $tv$ is transcendental over
$\Q(\zeta_p,\tilde{c})v=\F_p$ and
does not have a $p$-th root in $Kv$. Set $c=1-pt$ so that $1-c=pt$. Thus
$v(1-c)=vpt=vp<\frac{pvp}{p-1}$ and again we obtain from our above computations that
$v(a^p-\tilde{c}^{-p}t)=v(a^p-pt)>va^p$ and $v(\eta-1)=va=\frac{vp}{p}$. The former
implies
\[
v(\tilde{c}^p (\eta-1)^p-t)\> =\> v(\tilde{c}^p a^p-t)\> >\>v\tilde{c}^p a^p \>=\>0\>.
\]
This implies $\tilde{c}(\eta-1)v=(tv)^{1/p}$. We set $L:=K(\eta)=K(a)$ and observe that
$[Lv:Kv]=p$, so that $vL=vK$. Further, (\ref{spe}) again holds, so that
$v\frac{\zeta_p-1}{\eta-1}$ is the smallest positive element in $vK=vL$. This shows
that $I_\cE=\cM_L\,$.

\mn
%
%
%
\subsubsection{The defect case}                                 \label{sectdefceq}
\mbox{ }\sn
We assume $\cE$ to be a defect extension. Then $\cE$ is immediate and we know from
Theorem~\ref{imm_deg_p} and Theorem~\ref{ThmIEdef} that $vI_\cE=\Sigma_\cE$ has no
minimal element, so $I_{\cE}$ is nonprincipal. Hence for it to be equal to $\cM_L\,$,
both $\cM_L$ and $\cM_K$ must be nonprincipal.

We note:
\begin{lemma}
Let the assumptions be as described above. Then $I_{\cE}=\cM_L$ holds if and
only if $\,\Sigma_\cE=\{\alpha\in vL\mid \alpha>0\}$.
\end{lemma}

This shows that if $I_{\cE}=\cM_L$ holds, then $\cE$ has independent defect in the
sense of \cite{KuRz}. However, the converse does only hold if $vL$ is archimedean,
because otherwise $\cE$ has independent defect if and only if $I_{\cE}$ is equal to the
maximal ideal of any coarsening of $\cO_\cE$ other than $L$ itself.

\parm
For a famous example of an Artin-Schreier extension with independent defect satisfying
the equation $I_{\cE}=\cM_L$ originally due to Shreeram Abhyankar, see
\cite[Example 3.12]{Ku31}.
For other examples, see \cite[Proposition 6.14]{Nov}. In that paper, also Galois
extensions $(L|K,v)$ with higher powers of $p$ as their degrees are constructed whose
unique ramification ideal is equal to $\cM_L\,$: see Corollary 6.10 and the
interesting system of Artin-Schreier extensions in Section 6.3.

\pars
Finally, we present an example of a Kummer extension with independent defect satisfying
the equation $I_{\cE}=\cM_L\,$; it has been developed with the help of Konstantinos
Kartas. We work over $\Q_p$ and assume $p\neq 2$ (for simplicity). Let $K$ be the
$p$-cyclotomic field which is obtained by adjoining a primitive $p$-th root of unity
$\zeta_p$ to $\Q_p$ and then closing under a compatible system of $p^n$-th roots of
$\zeta_p\,$. The $p$-adic valuation of $\Q_p$ extends (uniquely) to a valuation $v$ on $K$, which again is henselian.

We consider the unibranched extension $\cE=(K(p^{1/p})|K,v)$.
First, we note that $p^{1/p} \notin K$; otherwise $Q_p(p^{1/p})$ would be
a subfield of the abelian extension $K|\Q_p$ and hence would be Galois over $\Q_p$
(since every subgroup of an abelian group is normal), which is not the case. Therefore,
$K(p^{1/p})|K$ is a proper extension and thus must be of degree $p$. As $\zeta_p\in K$,
it is a Kummer extension. We claim that $(K(p^{1/p})|K,v)$ is immediate. Since $vK$ is
$p$-divisible, it suffices to show that $K(p^{1/p})v=Kv$. But if this
were not true, then $K(p^{1/p})$ would be contained in the maximal unramified extension
of $K$, which is equal to $\Q_p^{ab}$. Again, this is a contradiction because this
would imply that $\Q_p(p^{1/p})|\Q_p$ is Galois. Therefore $(K(p^{1/p})|K,v)$ must be
immediate. As the extension is unibranched, it has defect $p$.

It is known that $(K,v)$ is a deeply ramified field (this follows e.g.\ from
computations in \cite{Ka2}). Hence by \cite[part (1) of Theorem 1.10]{KuRz},
$(K(p^{1/p})|K,v)$ has independent defect. We set $L:=K(p^{1/p})$; as an algebraic
extension of $\Q_p$, it has archimedean value group. Therefore, $I_{\cE}=\cM_L\,$.

\mn
%
%
%
\subsection{Defect does not always imply nonprincipality of ramification ideals.}                                 \label{sect2ex}
\n 
We are going to give an example of a Galois defect extension $(L|K,v)$ of
degree $p^2$, $p=\chara K>0$, which is a tower of two Galois extensions of degree $p$,
the upper one defectless and the lower a defect extension, but has only one
ramification ideal, this being principal.

%
%
%
We will construct a tower of two Galois extensions $L|L_0$ and $L_0|K$ of degree $p=
\chara K$. We need a criterion for $L|K$ to be Galois. We set $\wp(X):=X^p-X$. The
following is Lemma~2.9 in \cite{NN}:
\begin{lemma}                          \label{AStower}
Take Artin-Schreier extensions $L|L_0$ and $L_0|K$, and an Artin-Schreier generator
$\vartheta$ of $L|L_0$ with $\vartheta^p-\vartheta=b\in L_0$. Then $L|K$ is a Galois
extension if and only if $\sigma_0 b-b\in \wp(L_0)$ for some generator
$\sigma_0$ of $\Gal L_0|K$.
\end{lemma}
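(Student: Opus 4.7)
The plan is to prove both directions by looking at how extensions of $\sigma_0$ to $L$ act on $\vartheta$, exploiting the Artin--Schreier structure.

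For the \emph{sufficiency} direction, I would assume $\sigma_0 b - b = \wp(c)$ for some $c \in L_0$ and construct a $K$-automorphism $\tilde\sigma_0$ of $L$ extending $\sigma_0$ by decreeing $\tilde\sigma_0(\vartheta) := \vartheta + c$. This is well-defined via the universal property of $L = L_0[X]/(X^p - X - b)$: indeed
\[
(\vartheta + c)^p - (\vartheta + c) \;=\; b + \wp(c) \;=\; b + \sigma_0(b) - b \;=\; \sigma_0(b),
\]
so $\vartheta + c$ is a root of the $\sigma_0$-twisted polynomial $X^p - X - \sigma_0(b)$. Since $\tilde\sigma_0$ restricts to $\sigma_0 \neq \rmid$ on $L_0$, the cosets $\tilde\sigma_0^i \Gal(L|L_0)$ for $0 \le i \le p-1$ in $\Aut(L|K)$ are distinct, yielding $|\Aut(L|K)| \geq p^2 = [L:K]$. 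Separability of the tower $L|K$ then forces $L|K$ to be Galois.

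For the \emph{necessity} direction, I would use the standard fact that every group of order $p^2$ is abelian. Assuming $L|K$ is Galois, $\Gal(L|K)$ has order $p^2$ and is therefore abelian. Pick any lift $\tilde\sigma_0 \in \Gal(L|K)$ of $\sigma_0$ and set
\[
d \;:=\; \tilde\sigma_0(\vartheta) - \vartheta \;\in\; L.
\]
A direct computation gives $\wp(d) = \sigma_0(b) - b$. It remains to show $d \in L_0$. Letting $\tau$ be the generator of $\Gal(L|L_0)$ with $\tau(\vartheta) = \vartheta + 1$, the abelianness of $\Gal(L|K)$ gives $\tau\tilde\sigma_0 = \tilde\sigma_0\tau$, so
\[
\tau(d) \;=\; \tilde\sigma_0(\tau\vartheta) - \tau\vartheta \;=\; \tilde\sigma_0(\vartheta) + 1 - (\vartheta + 1) \;=\; d,
\]
and hence $d$ lies in the fixed field $L^{\langle\tau\rangle} = L_0$. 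Therefore $\sigma_0 b - b = \wp(d) \in \wp(L_0)$.

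The main obstacle is precisely the step $d \in L_0$ in the forward direction, since a priori $d$ only satisfies a polynomial of degree $p$ over $L_0$ and could generate $L$ over $L_0$. The resolution hinges on commuting $\tilde\sigma_0$ with $\tau$, which is available for free here because any group of order $p^2$ is abelian; this is essentially why the criterion works at the level of two Artin--Schreier steps and would need adjustment in longer towers.
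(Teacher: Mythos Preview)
Your proof is correct in both directions. The sufficiency argument properly uses the universal property of $L=L_0[X]/(X^p-X-b)$ to build an extension of $\sigma_0$, and the necessity argument correctly exploits that a group of order $p^2$ is abelian to force $d=\tilde\sigma_0(\vartheta)-\vartheta$ into $L_0$.

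Note, however, that the paper does not actually prove this lemma: it is simply quoted as Lemma~2.9 of \cite{NN}. So there is no ``paper's own proof'' to compare against. What you have supplied is a clean self-contained argument that could replace the citation. The only point worth tightening is purely expository: in the sufficiency direction you might state explicitly that a $K$-endomorphism of the finite extension $L$ is automatically an automorphism, so that $\tilde\sigma_0$ really lands in $\Aut(L|K)$; everything else is in order.
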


Consider the rational function field $\widetilde{F_p}(t)$ with the $t$-adic valuation
$v=v_t\,$. Extend $v$ to its algebraic closure and let $K_0=\widetilde{F_p}(t)^r$ be the 
respective ramification field. Then $vK_0$ is a subgroup of $\Q$ divisible by each prime
other than $p$, but $vt$ is not divisible by $p$ in $vK_0$. Choose a strictly increasing
sequence $(q_i)_{i\in\N}$ in $vK_0$ with upper bound $-1/p$ and starting with $q_1=-1$.
Define
\[
s\>:=\>\sum_{i\in\N} t^{pq_i}\>\in\> \widetilde{F_p}((t^{\Q}))\>.
\]
Take $(K,v)$ to be the henselization of $(K_0(s),v)$. 

Let $\vartheta_0$ be a root of the Artin-Schreier polynomial $X^p-X-s$. Define 
\[
c_k\>:=\>\sum_{i=1}^k t^{q_i}\>\in\> K\>.
\]
We compute:
\[
v(\vartheta_0-c_k)^p \>=\> v(\vartheta_0^p-c_k^p) \>=\> v(\vartheta_0+s-c_k^p)
 \>\geq\> \min\{v\vartheta_0, v(s-c_k^p)\}\>.
\]
Since $vs=-pvt<0$, we have $v\vartheta_0=-vt$. Further, $v(s-c_k^p)=pq_{k+1}vt<-vt$ since
$q_{k+1}<-1/p$. It follows that $v(\vartheta_0-c_k)^p=pq_{k+1}vt$, so that $v(\vartheta_0
-c_k)=q_{k+1}vt$. This increasing sequence of values is contained in $v(\vartheta_0-K)$.
It must be cofinal, showing that $v(\vartheta_0-K)$ has no maximal element, because the 
pseudo Cauchy sequence $(c_k)_{k\in\N}$ has no limit in $(K,v)$. It thus follows from
Lemma~\ref{imm_deg_p} that for $L_0:=K(\vartheta_0)$, the extension $\cE_0:=(L_0|K,v)$ is
immediate and thus a defect extension. From Theorem~\ref{dist_galois_p} we know that 
\[
vI_{\cE_0}\>=\> -v(\vartheta_0-K)\>,
\]
which has no minimal element and lower bound $\gamma:=vt/p\notin vI_{\cE_0}\,$. Hence
$I_{\cE_0}$ is nonprincipal. However, we will construct the extension $(L|K,v)$ such that
$I_{\cE_0}$ is not a ramification ideal of it.

\pars
Let $\vartheta$ be a root of the Artin-Schreier polynomial $X^p-X-\vartheta_0\,$, and
set $L:=L_0(\vartheta)=K(\vartheta_0,\vartheta)$. Since $v\vartheta_0=-vt<0$, We have
$v\vartheta=-vt/p\notin vK=vL_0\,$. Hence by Corollary~\ref{corapprdle}, the elements $1,
\vartheta,\ldots,\vartheta^{p-1}$ form a valuation basis of $\cE_1:=(L|K(\vartheta_0)
,v)$, showing that this extension is defectless. By part 1) of Theorem~\ref{ThmASKgen},
\[
I_{\cE_1} \>=\> \left(\frac{1}{\vartheta} \right)\>,
\]
so the minimum of $vI_{\cE_1}$ is $-v\vartheta=vt/p=\gamma$, which is
smaller than the values of all elements of $vI_{\cE_0}\,$.

Since $\vartheta^p-\vartheta=\vartheta_0\,$, we have $L=K(\vartheta)$. To show that
$L|K$ is a Galois extension, take some generator
$\sigma_0$ of $\Gal L_0|K$. Since $\sigma_0\vartheta_0$ is also
a root of $X^p-X-s$, we have $\sigma_0\vartheta_0-\vartheta_0=i$ for some
$i\in\F_p\,$. As $K$ contains $\widetilde{\F_p}$, it contains the Artin-Schreier roots
of $i$, i.e., $i\in\wp(K)\subseteq\wp(L_0)$. Now Lemma~\ref{AStower} shows that
$L|K$ is a Galois extension. However, by Corollary~2.10 of \cite{NN} it is not cyclic,
and the discussion leading up to this corollary shows the following. Take $\sigma\in G=
\Gal L|K$ such that $\sigma \vartheta_0-\vartheta_0=1$. Then $\zeta:=\sigma\vartheta-
\vartheta$ satisfies $\zeta^p-\zeta=1$ and is therefore an element of $\widetilde{\F_p}
\subset K_0\,$. Note that for every $n\in\N$, $\sigma^n\vartheta =n\zeta+\vartheta$.

Further, take $\tau\in G$ such that $\tau\vartheta-\vartheta=1$. Then
$\tau$ is trivial on $L_0$ and $\sigma$ and $\tau$ commute. Thus the subgroups of $G$ of
order $p$ are generated by the automorphisms $\tau$ and $\sigma\tau^i$, $0\leq i\leq
p-1$. Note that for every $n\in\N$, $\tau^n\vartheta=\vartheta+n$.

Let us first consider the subgroup $\langle\tau\rangle$ of $G$. Since $\langle\tau\rangle
=\Gal L|L_0$, the ramification ideal $I_{\langle\tau\rangle}$ is the ramification ideal
$I_{\cE_1}$ of the extension $\cE_1\,$.

Let us now consider the subgroups $\langle\sigma\tau^i\rangle$ of $G$, for $0\leq i\leq
p-1$. Since $\tau$ is trivial on $L_0\,$, the restrictions of all elements of each
subgroup $\langle\sigma\tau^i\rangle$ form the Galois group of $\cE_0\,$. Therefore,
\begin{equation}                          \label{gamma0'}
v\left(\frac{\rho a}{a} - 1\right)\> >\>\gamma  \quad \mbox{for all $a\in
L_0^\times$ and $\rho\in \Gal\cE_0$}\>.
\end{equation}
For $1\leq k\leq p-1$ we have $(\sigma\tau^i)^k=\sigma^k\tau^{ik}$ and
\[
\sigma^k\tau^{ik} \vartheta-\vartheta\>=\> \sigma^k(\vartheta+ik) -\vartheta\>=\>
k\zeta+\vartheta+ik-\vartheta\>=\>k\zeta+ik\,\in\,\widetilde{\F_p}\>,
\]
hence $v(\sigma^k\tau^{ik} \vartheta-\vartheta)=0$ and
\[
v\left(\frac{\sigma^k\tau^{ik} \vartheta}{\vartheta} - 1\right)\>=\>-v\vartheta\>=\>
\gamma\>.
\]
Applying part 2) of Lemma~\ref{lemv(sai/ai-1)}, we find that for $1\leq \ell\leq p-1$,
\begin{equation}
v\left(\frac{\sigma^k\tau^{ik} \vartheta^\ell}{\vartheta^\ell} - 1\right)\>=\>
\gamma\>.
\end{equation}
Now we can apply part 3) of Proposition~\ref{compri} to deduce that (\ref{gamma=minL})
holds with $\langle\sigma\tau^i\rangle$ in place of $G$. This shows that also the
ramification ideals $I_{\langle\sigma\tau^i\rangle}$ are equal to $I_{\cE_1}\,$.

Finally, since $\Gal L|K$ is the union of all subgroups listed above, it follows that
(\ref{gamma=minL}) also holds for $G=\Gal L|K$. Hence, $I_G=I_{\cE_1}\,$. We have now
proved:
\begin{proposition}                 \label{counterexAS}
There are Galois extensions of degree $p^2$ of valued fields in equal characteristic
$p>0$ that have only one ramification ideal, and this ramification ideal is principal
although the extension is not defectless.
\end{proposition}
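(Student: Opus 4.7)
The plan is to exhibit a concrete tower $K \subset L_0 \subset L$ of two Artin-Schreier extensions of degree $p$ in equal characteristic $p$, with $\cE_0 = (L_0|K,v)$ a defect extension and $\cE_1 = (L|L_0,v)$ defectless, arranged so that the principal ramification ideal $I_{\cE_1}$ is strictly contained in the nonprincipal ramification ideal $I_{\cE_0}$. The design target is some $\gamma \in vL$ for which $I_{\cE_1}$ is generated by an element of value $\gamma$ while $\gamma$ is an unattained infimum of $vI_{\cE_0}$. If I can also verify that every nontrivial subgroup of $G = \Gal L|K$ induces the same ramification ideal, then $G$ itself induces it, so $I_{\cE_1}$ becomes the unique ramification ideal of $(L|K,v)$ — principal, even though $\cE$ is not defectless by multiplicativity.

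For the construction I would work over $K_0 = \widetilde{\F_p}(t)^r$ with the extension of the $t$-adic valuation, pick a strictly increasing sequence $(q_i)$ in $vK_0$ with upper bound $-1/p$ and $q_1 = -1$, form $s = \sum_i t^{pq_i}$, and take $(K,v)$ as the henselization of $(K_0(s),v)$. A root $\vartheta_0$ of $X^p - X - s$ satisfies $v(\vartheta_0 - c_k) = q_{k+1} vt$ for the partial sums $c_k = \sum_{i=1}^k t^{q_i}$, so $v(\vartheta_0 - K)$ has no maximum; Lemma~\ref{imm_deg_p} then forces $\cE_0$ to be immediate, and Theorem~\ref{dist_galois_p} identifies $vI_{\cE_0} = -v(\vartheta_0 - K)$, a nonprincipal final segment whose unattained infimum is $\gamma := vt/p$. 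For the top layer, take $\vartheta$ with $\vartheta^p - \vartheta = \vartheta_0$ and set $L := L_0(\vartheta)$. Since $v\vartheta = -vt/p \notin vK$, Corollary~\ref{corapprdle} produces the valuation basis $1, \vartheta, \dots, \vartheta^{p-1}$ of $\cE_1$, and Theorem~\ref{ThmASKgen}(1) delivers $I_{\cE_1} = (1/\vartheta)$ with generator of value exactly $\gamma$.

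To see $L|K$ is Galois I apply Lemma~\ref{AStower}: for a generator $\sigma_0$ of $\Gal L_0|K$ the difference $\sigma_0 \vartheta_0 - \vartheta_0$ lies in $\F_p$, and since $\widetilde{\F_p} \subset K$ every element of $\F_p$ lies in $\wp(K) \subseteq \wp(L_0)$. Invoking the analysis of \cite[Cor.~2.10]{NN} alluded to in the text, $G$ is noncyclic of order $p^2$ with exactly $p+1$ subgroups of order $p$: namely $\langle\tau\rangle = \Gal L|L_0$, and $\langle\sigma\tau^i\rangle$ for $0 \leq i \leq p-1$, where $\sigma$ can be chosen so that $\sigma\vartheta_0 - \vartheta_0 = 1$ and $\sigma\vartheta - \vartheta = \zeta \in \widetilde{\F_p} \subset K$.

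The crux is to show $I_H = I_{\cE_1}$ for each such order-$p$ subgroup $H$. For $H = \langle\tau\rangle$ this is tautological. For $H = \langle\sigma\tau^i\rangle$ I would invoke part (3) of Proposition~\ref{compri} with intermediate field $L_0$ and the valuation basis $1, \vartheta, \dots, \vartheta^{p-1}$, choosing $\gamma_0$ in $vI_{\cE_0}$, so $\gamma_0 > \gamma$ strictly — this is exactly where the unattained-infimum property bought by the pseudo-Cauchy construction is used. The restriction of $H$ to $L_0$ generates $\Gal \cE_0$, giving $v(\rho a/a - 1) \geq \gamma_0 > \gamma$ for $a \in L_0^\times$ and $\rho \in H$, while a direct computation via Lemma~\ref{lemv(sai/ai-1)} yields $v(\sigma^k\tau^{ik}\vartheta^\ell/\vartheta^\ell - 1) = \gamma$ for $1 \leq k, \ell \leq p-1$. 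Proposition~\ref{compri}(3) then delivers the equality $I_H = I_{\cE_1}$. Since $G$ is the union of its order-$p$ subgroups, the same estimate applies to $G$, so $I_G = I_{\cE_1}$ is the unique, principal ramification ideal, completing the proof. The main obstacle throughout is calibrating the parameters so that $v\vartheta$ matches the infimum of $vI_{\cE_0}$ without ever being attained there — any coarser construction would either make $I_{\cE_0}$ principal or produce two distinct ramification ideals.
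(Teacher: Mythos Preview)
Your proposal is correct and follows essentially the same construction and argument as the paper. One minor slip: when invoking Proposition~\ref{compri}(3) you should take $\gamma_0 = \gamma$ itself (with strict inequality in (\ref{gamma0}) holding because $\gamma$ is the unattained infimum of $vI_{\cE_0}$), rather than ``choosing $\gamma_0$ in $vI_{\cE_0}$'', since $vI_{\cE_0}$ has no minimum and hence no single element of it serves as a uniform lower bound for all $v(\rho a/a - 1)$.
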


The connection between the number of ramification ideals in a finite Galois extension and its depth is studied in \cite{Nov}. For the notion of depth, see \cite{NN}.

%
%
%

\bn\bn\bn

\end{document}